\numberwithin{equation}{section}
\newcommand*\circled[1]{\tikz[baseline=(char.base)]{
            \node[shape=circle,draw,inner sep=2pt] (char) {#1};}}
\DeclareFontFamily{U}{mathb}{\hyphenchar\font45}
\DeclareFontShape{U}{mathb}{m}{n}{
      <5> <6> <7> <8> <9> <10> gen * mathb
      <10.95> mathb10 <12> <14.4> <17.28> <20.74> <24.88> mathb12
      }{}
\DeclareSymbolFont{mathb}{U}{mathb}{m}{n}
\DeclareMathSymbol{\righttoleftarrow}{3}{mathb}{"FD}
\newcommand{\eqto}{\stackrel{\lower1.5pt\hbox{$\scriptstyle\sim\,$}}\to}
\newcommand{\eqdashto}{\stackrel{\lower1.5pt\hbox{$\scriptstyle\sim\,$}}\dashrightarrow}
\newcommand{\actsfromleft}{\mathrel{\reflectbox{$\righttoleftarrow$}}}
\newcommand{\actsfromright}{\righttoleftarrow}
\newcommand{\xycenter}[1]{
	\begin{center}
	\mbox{\xymatrix{#1}}
	\end{center}
	}
\theoremstyle{plain}
\newtheorem{prop}{Proposition}
\newtheorem{theo}[prop]{Theorem}
\newtheorem*{theo*}{Theorem}
\newtheorem{lemm}[prop]{Lemma}
\theoremstyle{definition}
\newtheorem{defi}[prop]{Definition}
\newtheorem{rema}[prop]{Remark}
\newtheorem{exam}[prop]{Example}
\newcommand{\bA}{\mathbb A}
\newcommand{\bC}{\mathbb C}
\newcommand{\bF}{\mathbb F}
\newcommand{\bP}{\mathbb P}
\newcommand{\bQ}{\mathbb Q}
\newcommand{\bZ}{\mathbb Z}
\newcommand{\cE}{\mathcal E}
\newcommand{\cK}{\mathcal K}
\newcommand{\cO}{\mathcal O}
\newcommand{\cU}{\mathcal U}
\newcommand{\rD}{\mathsf D}
\newcommand{\rH}{\mathrm H}
\newcommand{\rK}{\mathrm K}
\newcommand{\fA}{\mathfrak A}
\newcommand{\fD}{\mathfrak D}
\newcommand{\fS}{\mathfrak S}
\newcommand{\PGL}{\operatorname{PGL}}
\newcommand{\Burn}{\operatorname{Burn}}
\newcommand{\Gr}{\operatorname{Gr}}
\newcommand{\Pic}{\operatorname{Pic}}
\newcommand{\Aut}{\operatorname{Aut}}
\newcommand{\Lin}{\mathfrak{L}\mathrm{in}}
\newcommand{\PLin}{\mathfrak{P}\mathfrak{L}\mathrm{in}}
\newcommand{\SLin}{\mathfrak{S}\mathfrak{L}\mathrm{in}}
\newcommand{\lra}{\longrightarrow}
\author[B\"ohning]{Christian B\"ohning}
\address{Mathematics Institute\\
Zeeman Building\\
University of Warwick\\
Coventry CV4 7AL\\
UK}
\email{C.Boehning@warwick.ac.uk}
\author[von Bothmer]{Hans-Christian Graf von Bothmer}
\address{Fachbereich Mathematik\\
Bundesstraße 55\\
20146 Hamburg\\
Germany}
\email{hans.christian.v.bothmer@math.uni-hamburg.de}
\author[Tschinkel]{Yuri Tschinkel}
\address{Courant Institute\\
                New York University \\
                New York, NY 10012 \\
                USA }
\email{tschinkel@cims.nyu.edu}
\address{Simons Foundation\\
160 Fifth Avenue\\
New York, NY 10010\\
USA}
\title[Equivariant birational types]{Equivariant birational types and derived categories}
\begin{document}
\date{\today}

\begin{abstract}
We investigate equivariant birational geometry of rational surfaces and threefolds 
from the perspective of derived categories.  
\end{abstract}

\maketitle

\section{Introduction}
\label{sect:intro}

Let $X$ be a smooth projective variety over an algebraically closed field $k$, of characteristic zero. Assume that $X$ is equipped with a regular, generically free, action of a finite group $G$. 
A major topic in birational geometry is to understand 
equivariant birational types, e.g., to decide 
whether or not $X$ is 
\begin{itemize}
\item 
{\em (projectively) linearizable}, i.e., 
equivariantly birational to projective space, with a (projectively) linear action of $G$, or
\item {\em stably (projectively) linearizable}, i.e., (projectively) linearizable
after taking a product with $\bP^m$, for some $m$, with trivial action on the
second factor.
\end{itemize}
One of the motivations is the analogy of this theory with birational geometry over nonclosed ground fields and, in particular, with the central problem of (stable) rationality over such fields, where the role of $G$ is taken by the absolute Galois group of the ground field, acting on geometric objects.

Various tools have been developed to distinguish equivariant birational types, e.g., cohomology, derived categories, and more recently, equivariant Burnside groups (see \cite{HT-stable}). 
In this note, we investigate the interactions between different perspectives on the (stable) linearizability problem. We focus on low-dimensional examples, in particular, Del Pezzo surfaces, rational Fano threefolds and fourfolds. We explore the compatibility of group actions with standard (stable) rationality constructions and conjectures, and produce new examples of stably linearizable but nonlinearizable actions.

In detail, in Section~\ref{sect:equi}, we discuss basic notions of equivariant birational geometry, classical invariants of $G$-actions on varieties, as well as the recently developed Burnside formalism \cite{BnG}.  We present applications of a multilinear algebra construction, Proposition~\ref{prop:stable-rat}, to exhibit new examples of nonlinearizable but stably linearizable actions, e.g., we show in Example~\ref{exam:a5} that, 
for $G=\fA_5$, the $G$-birationally rigid, and thus not linearizable, quintic Del Pezzo threefold is stably linearizable.  

In Section~\ref{sect:derived}, we 
study exceptional sequences in derived categories, in presence of $G$-actions, and their connections with classical invariants. In Section~\ref{sect:dp}, 
we prove

\begin{theo*}
    A smooth projective rational $G$-surface that is linearizable has a full $G$-equivariant exceptional sequence. 
\end{theo*}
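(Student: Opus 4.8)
The plan is to use the $G$-equivariant minimal model program together with Orlov's blow-up formula to reduce the statement to the case of a $G$-minimal rational surface, and then to run through the (classified, short) list of $G$-minimal rational surfaces that are linearizable. Running the $G$-MMP on the smooth projective rational $G$-surface $X$ produces a $G$-equivariant birational morphism $X = X_r \to X_{r-1} \to \dots \to X_0$ onto a $G$-minimal rational surface $X_0$, each arrow contracting a single $G$-orbit of pairwise disjoint $(-1)$-curves; equivalently, $X$ is obtained from $X_0$ by successively blowing up $G$-orbits of points. Since being linearizable --- i.e.\ $G$-birational to $\bP^2$ --- is a $G$-birational invariant, $X_0$ is again linearizable.

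Next I would check that such a blow-up preserves the existence of a full $G$-equivariant exceptional sequence. Let $\pi\colon Y' = \Bl_O Y \to Y$ be the blow-up of a smooth projective rational $G$-surface $Y$ along a $G$-orbit $O = \{p_1,\dots,p_s\}$ of points, with exceptional curves $E_1,\dots,E_s$. By Orlov's blow-up formula, $\cO_{E_1}(-1),\dots,\cO_{E_s}(-1)$ is a completely orthogonal exceptional sequence, left orthogonal to $\pi^*\rD^b(Y)$, and the two together generate $\rD^b(Y')$; since $G$ permutes the $E_i$, it permutes the $\cO_{E_i}(-1)$, so these form a $G$-equivariant block. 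Hence a full $G$-equivariant exceptional sequence $E^Y_1,\dots,E^Y_m$ on $Y$ yields the full $G$-equivariant exceptional sequence $\cO_{E_1}(-1),\dots,\cO_{E_s}(-1),\pi^*E^Y_1,\dots,\pi^*E^Y_m$ on $Y'$. Iterating along the chain above, it remains to produce a full $G$-equivariant exceptional sequence on the $G$-minimal surface $X_0$.

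By the classification of $G$-minimal rational surfaces (Manin, Iskovskikh), $X_0$ is $\bP^2$, a $G$-del Pezzo surface with $\Pic(X_0)^G \cong \bZ$, or a $G$-conic bundle over $\bP^1$ with $\Pic(X_0)^G \cong \bZ^2$; imposing linearizability restricts this, by Dolgachev--Iskovskikh, to a short explicit list. On each surface of the list I would exhibit a full exceptional sequence preserved by $G$: on $\bP^2$ the Beilinson sequence $\cO,\cO(1),\cO(2)$, whose terms are each $G$-invariant; on $\bP^1\times\bP^1$ (including the case where $G$ interchanges the rulings) the sequence $\cO,\cO(1,0),\cO(0,1),\cO(1,1)$, with the completely orthogonal block $\{\cO(1,0),\cO(0,1)\}$ interchanged by $G$; on the minimal ruled surfaces $\bF_n$ the $G$-equivariant form of Orlov's projective-bundle formula over the base $\bP^1$ (on which $G$ acts trivially on $\Pic$ for $n\ge 1$); on $G$-conic bundles with reducible fibers a $G$-equivariant relative exceptional resolution over $\bP^1$ in the spirit of Kuznetsov's description of conic-bundle derived categories; and on the remaining del Pezzo surfaces of the list their known full exceptional sequences --- sequences of line bundles permuted by $\Aut(X_0)$ in the toric cases (e.g.\ the sextic del Pezzo, permuted by its hexagonal symmetry). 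Verifying in each case that the sequence is full and genuinely stable under $G$ completes the argument.

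The main obstacle is this last step. It requires, as external input, the classification of which $G$-minimal rational surfaces are linearizable, and --- more delicately --- for the conic bundle cases it requires a $G$-equivariant relative exceptional resolution that behaves well when $G$ permutes the components of the reducible fibers, so that the resulting sequence is honestly $G$-stable and not merely stable up to the $G$-action on the base; this is where the construction, rather than the formal reduction steps, does the real work.
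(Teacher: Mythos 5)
Your Step 1 (reduction to a $G$-minimal model via the equivariant MMP and Orlov's blow-up formula) is essentially the paper's Step 1, modulo one point you gloss over: for the block $\cO_{E_1}(-1),\dots,\cO_{E_s}(-1)$ to be $G$-\emph{equivariant} (not merely invariant) you must linearize each $\cO_{E_i}(-1)$ for the stabilizer of $E_i$; the paper does this by observing that the stabilizer $G_x$ acts linearly on the tangent space at the blown-up point. This same invariant-versus-equivariant distinction is the first genuine gap in your treatment of the minimal surfaces: exhibiting a full exceptional sequence whose terms are \emph{permuted} by $G$ (Beilinson on $\bP^2$, Kapranov on $\bP^1\times\bP^1$, the toric three-block sequence on the sextic Del Pezzo, the relative Beilinson sequence on $\bF_n$) only gives $G$-invariance, and invariance does not imply equivariance -- this is exactly the content of Lemma~\ref{lem:NoEquivariant} and of the DP6 discussion, where the normalizer of the torus stabilizes $\cO_X(H)$ but does not linearize it. The missing mechanism is Lemma~\ref{lemm:InvEquiv}: linearizability of $X$ forces the Amitsur invariant $\mathrm{Am}(X,H)$ to vanish for all $H\subseteq G$, and only then do invariant line bundles acquire linearizations for their stabilizers. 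Without invoking linearizability in this way, your case-by-case verifications prove a weaker statement than the theorem.

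The second, more serious gap is your plan for the $G$-minimal conic bundles and the minimal Del Pezzo surfaces of degree $\le 4$. You propose to build a ``$G$-equivariant relative exceptional resolution over $\bP^1$ in the spirit of Kuznetsov'', but for a conic bundle with degenerate fibres the Clifford-algebra piece of the relative decomposition is (a twist of) the derived category of the discriminant double cover, which is not generated by exceptional objects once that curve has positive genus; there is no relative full exceptional sequence to equivariantize in the cases that matter. Moreover, the ``short explicit list'' of linearizable minimal surfaces you want to quote as external input does not exist off the shelf: part of the paper's work is precisely to show that the residual minimal cases are never linearizable, so that they are vacuous for the theorem -- via birational rigidity from \cite{DI} and \cite{pro-II}, the failure of {\bf (H1)} (degree $3$ conic bundles), the de Jonqui\`{e}res argument (degree $4$), and two new Burnside-group computations (Lemmas~\ref{lemm:conic1} and~\ref{lemm:conic2}) for the degree $1$ conic bundles and the exceptional degree $2$ conic bundles with $g=2$ that pass the {\bf (H1)} test. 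You correctly identify this as ``where the real work'' lies, but you supply no argument for it, and the construction you sketch would not close it; the viable route is nonlinearizability of those minimal models, not construction of equivariant sequences on them. (For the degree $5$ Del Pezzo surface you also need the nontrivial input of \cite{CaTe20}, which your appeal to ``known full exceptional sequences'' leaves implicit.)
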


The proof relies on the classification of finite subgroups in the Cremona group of \cite{DI}, and subsequent developments in equivariant geometry of rational surfaces. Over nonclosed fields the situation was investigated in \cite{auel-bernardara} and \cite{BD-cat}, in particular, we view this theorem as an analog of \cite[Corollary 1]{auel-bernardara}. However, we also give an example, in the equivariant context, where the analog of \cite[Theorem 1]{auel-bernardara} fails. 

In Section~\ref{sect:three}, we turn to Fano threefolds. 
For quintic Del Pezzo threefolds, with give examples of nonlinearizable actions of finite groups $G$ with derived categories admitting full exceptional sequences of $G$-linearized objects. 
This disproves the equivariant analog of the well-known conjecture that a smooth projective variety with a full exceptional sequence over the ground field should be rational. The corresponding $G$-actions are stably linearizable.

For Fano threefolds of genus 7, we show that there are nonlinearizable actions in presence of $G$-invariant semiorthogonal decompositions, with pieces equivalent, as $G$-categories, to derived categories of $G$-varieties of codimension $\ge 2$.

\

\noindent
{\bf Acknowledgments:} 
For the purpose of open access, the first author has applied a Creative Commons Attribution (CC-BY) licence to any Author Accepted Manuscript version arising from this submission. 
The third author was partially supported by NSF grant 2301983.

\section{Equivariant geometry}
\label{sect:equi}

\subsection*{Terminology}
Throughout, $G$ is a finite group. We consider generically free regular actions of $G$ on irreducible algebraic varieties over $k$, an algebraically closed field of characteristic zero, and refer to such varieties as $G$-varieties. We write
$$
X\sim_G Y
$$
if the $G$-varieties $X,Y$ are equivariantly birational, and 
introduce subcategories of the category of $G$-varieties:
\begin{itemize}
    \item $\Lin$ - $G$-linearizable, 
    \item $\PLin$ - projectively $G$-linearizable, 
    \item $\SLin$ - stably $G$-linearizable. 
    \end{itemize}

A basic result in $G$-birational geometry is equivariant resolution of singularities and weak factorization: $G$-birational varieties
are related via blowups and blowdowns, with centers in smooth $G$-stable subvarieties. Moreover, after a sequence of such blowups, one can reach a {\em standard model} $\tilde{X}\to X$ such that on $\tilde{X}$ all stabilizers are abelian, and $G$-orbits of divisors with nontrivial stabilizers are smooth, i.e., for every such $D$ and $g\in G$, the intersection $(D\cdot g)\cap D$ is either all of $D$ or empty.

\subsection*{Classical invariants}
The $G$-action on $X$ induces actions on cohomology groups, and in particular on the Picard group $\Pic(X)$. 
There is an exact sequence, see, e.g., \cite[\S 3]{KT-dp}
\[
\!\!\!\!\xymatrix{
& \mathrm{Pic}(X, G) \ar[r] & \mathrm{Pic}(X)^G \ar[r]^{\!\!\delta_2} & \rH^2 (G, k^{\times} ) \\
\ar[r] & \mathrm{Br} ([X/G]) \ar[r] & \rH^1 (G, \mathrm{Pic}(X)) \ar[r]^{\,\,\delta_3} & \rH^3 (G, k^{\times} ),
}
\]
where: 
\begin{itemize}
\item $\mathrm{Pic}(X, G)$ is the group of isomorphism classes of $G$-linearized line bundles, and $\mathrm{Pic}(X)^G$ the group of $G$-invariant line bundles
\item $[X/G]$ is the quotient stack, $\mathrm{Br} ([X/G])$ its Brauer group, and
\end{itemize}
Both $\delta_2$ and $\delta_3$ are zero when there are $G$-fixed points; these give sections of the map $[X/G]\to BG$.  
Other frequently  studied obstructions to (stable) linearizabilty are:
\begin{itemize}
    \item $\mathrm{Am}(X,H)$, the Amitsur invariant, i.e., the image of 
    $$
    \delta_2:  \Pic(X)^H\to \rH^2(H,k^\times), \quad H\subseteq G,
    $$
    \item {\bf (H1)}: $\rH^1(H,\Pic(X)) =\rH^1(H,\Pic(X)^\vee)=0$, $H\subseteq G$,
    \item {\bf (SP)}: $\Pic(X)$ is a stable $G$-permutation module.
 \end{itemize}
 If $X\in \PLin$ or $\Lin$, then {\bf (H1)} and {\bf (SP)} hold; when $X\in \Lin$ then 
 the Amitsur invariant vanishes.

\subsection*{Burnside formalism}
Let $G$ be a finite group, acting on $X$, a standard model for the action.  
On such a model one computes the class of the $G$-action in the {\em Burnside group}:
\begin{equation}
\label{eqn:class}
[X\actsfromright G] = \sum_{F,H} (H,Y\actsfromleft k(F), \beta) \in \Burn_n(G),  \, n=\dim(X),
\end{equation}
as a sum of {\em symbols}, recording ($G$-orbits of) irreducible subvarieties $F\subset X$ with nontrivial generic stabilizer $H$, together with the induced action of a subgroup $Y\subseteq Z_G(H)/H$ on the function field $k(F)$ and the collection $\beta$ of weights of $H$ in 
the normal bundle to $F$ (all defined up to conjugation in $G$).
In particular, this sum contains the trivial summand
$$
(1, G\actsfromleft k(X), ()),
$$
The symbols are subject to explicit relations so that the class 
\eqref{eqn:class} is 
an equivariant birational invariant (see \cite{BnG}, \cite{HKT-small} for definitions and examples). 
The trivial summand does not participate in relations; we say that the $G$-action on $X$ has trivial Burnside class if
$$
[X\actsfromright G] = (1, G\actsfromleft k(X), ())
$$
in $\Burn_n(G)$. 
{\em Incompressible divisorial symbols} (modulo conjugation relation), in the terminology of \cite[Definition 3.3]{KT-vector}, generate, freely, a direct summand of $\Burn_n(G)$; in many situations, it suffices to compare their contribution to $[X\actsfromright G]$ to 
distinguish $G$-actions up to equivariant birationality, see \cite[Section 3.6]{TYZ}. 

The paper \cite{KT-vector} provides an algorithm for the computation of $[\bP(V)\actsfromright G]$ for linear and projective linear actions of a finite group $G$; this algorithm has been implemented in {\tt Magma}, see \cite{TYZ}. 
While the formalism and the computations can be involved, incompressible divisorial symbols allow to quickly show nonlinearizability of some actions.
Indirectly, they also lead to constraints on possible actions:

\begin{exam}
\label{exam:x22}
Let $X\subset \bP^n$ be a prime (smooth) Fano threefold of index 1, in its anticanonical embedding. 
Let $\sigma\in \PGL_{n+1}$ be an involution preserving a hyperplane. 
Does $\sigma$ preserve $X$? 

If so, we would have $X^\sigma = S$, 
a surface, yielding a symbol
$$
(\langle \sigma\rangle, 1\actsfromleft  k(S), (1))\in \Burn_3(C_2).
$$
Generically, $S$ would be a K3 surface, the symbol incompressible, and thus the action not linearizable. 

On the other hand, consider smooth Fano threefolds $X=X_{22}$ of genus 12. 
We know that $G$-actions on $X$ are linearizable, if there is a (sufficiently general) fixed point; in the arithmetic setup this is discussed in \cite[Theorem 5.17]{Kuz-Pro}. 

This tension can be reconciled, in fact, $X$ cannot carry such involutions. We sketch an argument:
According to Mukai, cf. \cite{Schreyer01}, $X$ can be constructed as follows: start with a $7$-dimensional vector space $V$, a $3$-dimensional vector space $U$, and a linear map $\eta\colon \wedge^2(V) \to U^*$. Dually, this arises from a linear map $\eta^*\colon U \to \wedge^2(V^*)$. Consider the Grassmannian $\mathrm{Gr}(3, V) \subset \bP( \wedge^3(V))$, and for the universal subbundle $\mathcal{U}$ notice that $\rH^0 (\mathrm{Gr}(3, V), \mathcal{U}^*)=\wedge^2 (V^*)$. The zeros of the sections in $U$ on $\mathrm{Gr}(3, V)$ yield $X$. Equivalently, we have a linear map
\[
\wedge^3(V) \to V \otimes U^*,
\]
induced by wedging elements in $\wedge^3(V)$ with elements in $U$, and the kernel $K$ is a $14$-dimensional subspace of $\wedge^3(V)$ such that 
$$
X= \mathrm{Gr}(3, V) \cap \bP(K).
$$
We can view $\eta^*$ as an element in $\wedge^2 (V^*) \otimes U^*$, i.e., a skew-symmetric $7\times 7$-matrix with entries in $U^*$. The $6\times 6$ Pfaffians of this matrix define an Artinian Gorenstein module of codimension $3$ over $k[U^*]$, with dual socle generator a quartic $F$, see \cite[Theorem 2.6 and its proof]{Schreyer01}. Conversely, the datum of this quartic or Artinian Gorenstein module allows to reconstruct the skew-symmetric $7\times 7$-matrix with entries in $U^*$ by considering the middle map in the Buchsbaum-Eisenbud resolution of the module. 

By \cite[Theorem 6.1]{Schreyer01}, the Scorza quartic $S_F$ covariantly associated to $F$ is isomorphic to the Hilbert scheme of lines $F_1 (X)$ in $X$, and by \cite[Appendix, Claim A.1.1., (by Prokhorov)]{DM22}, the automorphisms of $X$ embed injectively into those of $F_1 (X)$. 

Now assume that $G=C_2$ is acting faithfully on $X$, through the $G$-representations $U$ and $V$ and an equivariant map $\eta$, as described above. Then $G$ embeds into the automorphisms of the quartic $C$, and acts faithfully on $U$ with weights $(1,1,0)$ or $(0,0,1)$. Then \cite[Theorem 2.6]{Schreyer01} implies that $V$ can be recovered as the kernel of the linear map 
\[
\mathrm{Sym}^3(U) \to U 
\]
given by contracting with the equation of the $C_2$-invariant quartic $F$. This map is equivariant (possibly after tensoring the target $U$ with a sign). 
After that, we recover $K$ as the kernel of the $G$-equivariant map 
\[
\wedge^3(V) \to V \otimes U^*,
\]
above. Working through all sign combinations, one verifies that   $G$ cannot act on $K$ with all weights but one equal to each other.   
\end{exam}

\begin{exam}
\label{exam:cubic}
Let $X\subset \bP^5$ be a smooth cubic fourfold, with an action of $G=C_m$, with weights $(0,0,0,0,0,b)$; note that only $m=2,3$ are possible, under this assumption. Assume that the divisor $D\subset X$, given by the vanishing of the last coordinate, is smooth. Then $X\notin \PLin$. 

Indeed, the corresponding symbol 
$(C_m,1\actsfromleft k(D),(b))$ is incompressible, since $D$ is not birational to $S\times \bP^1$, for any surface $S$, and does not appear in classes of linear actions, 
(see \cite[Corollary 6.1]{TYZ}). 

For $m=2$, \cite[Theorem 1.2(2)]{Marcubic}
shows that a {\em very general} $X$ carrying such an involution does not have an associated K3 surface and is expected to be nonrational; and, in particular, the action would not be linearizable.  
The same argument applies for $m=3$. 
\end{exam}

\subsection*{Pfaffians and Grassmannians}
\label{sect:grass}

In \cite[Section 7]{BBT23}, we have used a construction from multilinear algebra, the Pfaffian construction, to exhibit nonlinearizable but stably linearizable actions of finite groups on rational varieties, e.g., rational cubic fourfolds. The starting point is a Pfaffian variety 
$$
X:=\mathrm{Pf}(W)\cap \bP(L),
$$
where $V$ is a vector space of dimension $n=2m$ and $L\subset \wedge^2(W)$ a linear subspace of dimension $n$. Then there is a diagram 

\centerline{
\xymatrix{  & \ar[dl]_{p} \bP(\cK_X) \ar[dr]^{q} & \\
X &     & \bP(W^*)
}
}

\

\noindent
where $\cK_X$ is a vector bundle of rank 2 and $q$ is birational. In presence of group actions, choosing a $G$-representation $W$ and a subrepresentation $L$, one obtains, under suitable genericity assumptions, 
an equivariant birationality:
$$
X\times \bP^1\sim_G \bP(W^*), 
$$
with trivial action on the second factor.

\begin{exam}
\label{cubic:new}
Let $G=C_5\rtimes \fD_{15}\rtimes C_3$, {\tt{GapID}}(450,24). 
It acts generically freely on the singular (toric) cubic fourfold $X\subset \bP^5$ with equation
$$
x_1x_3x_5+x_2x_4x_6=0,
$$
a degeneration of the Pfaffian cubic considered in \cite[Example 14]{BBT23}. The $G$-action on $X$ is not linearizable, as $G$ 
does not have faithful representations of dimension $<6$. 
The Pfaffian construction applies: by \cite[Corollary 13]{BBT23}, the $G$-action on $X\times \bP^1$, with trivial action on the second factor, is linearizable.
\end{exam}

Here, we present another such construction, applicable to subvarieties of Grassmannians. 
Linear sections of Grassmannians admit tautological stable rationality constructions, that we now describe: Let $W$ be an $n$-dimensional vector space over $k$ and $\Gr(2,W)$ the Grassmannian of planes in $W$. Let $V\subset \wedge^2(W)$ be a linear subspace of codimension $r$. 
Put 
$$
X:=\Gr(2,W)\cap \bP(V)
$$
and consider the diagram

\centerline{
\xymatrix{  & \ar[dl]_{p} \bP(\cU_X) \ar[dr]^{q} & \\
X &     & \bP(W),
}
}

\

\noindent
where $\cU_X$ is the restriction of the universal vector bundle over $\Gr(2,W)$ to $X$. This yields a stable rationality construction, as both $p$ and $q$ are vector bundles, in the indicated range of dimensions. 

\begin{prop}
\label{prop:stable-rat}
Let $k$ be an algebraically closed field of characteristic zero and $G$ a finite group. Let $W$ be an $n$-dimensional representation of $G$ over $k$ and $V\subset \wedge^2(W)$ a subrepresentation of codimension $r\le n-2$   
such that the $G$-actions on $\bP(W)$ and $\bP(V)$ are generically free. Assume that  
\begin{itemize}
    \item[($\ast$)] $X$ is irreducible of dimension $\dim(\Gr(2,n))-r = 2(n-2)-r$.  
\end{itemize}
Then 
$$
X\times \bP^1 \sim_G \bP(W)\times \bP^{n-2-r},
$$
with trivial actions on the second factors. 
\end{prop}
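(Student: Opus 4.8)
The plan is to realize the variety $\bP(\cU_X)$ in the middle of the displayed diagram simultaneously as a projective bundle over $X$ (via $p$) and, birationally, as a projective bundle over $\bP(W)$ (via $q$), and then to trivialize both bundles $G$-equivariantly by means of the no--name lemma. Recall that lemma in the form we need: if a finite group $G$ acts generically freely on an irreducible variety $Y$ and $\cE\to Y$ is a $G$-linearized vector bundle, then over a $G$-stable dense open $U\subseteq Y$ there is a $G$-equivariant isomorphism of vector bundles $\cE|_U\cong U\times\bA^{\rk\cE}$ with trivial linear action on the fibre; passing to projectivizations gives a $G$-equivariant birationality $\bP(\cE)\sim_G Y\times\bP^{\rk\cE-1}$ with trivial action on $\bP^{\rk\cE-1}$. (We use implicitly that $G$ acts generically freely on $X$ --- this is part of what it means for $X\times\bP^1$, with trivial action on $\bP^1$, to be a $G$-variety --- and, consequently, on $\bP(\cU_X)$; the latter also follows since $q$ is $G$-equivariant and, as shown below, dominant onto $\bP(W)$, so that the stabilizer of a general point of $\bP(\cU_X)$ injects into that of a general point of $\bP(W)$.)

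The $p$-side is immediate: $\cU_X$ is the restriction to the $G$-stable subvariety $X$ of the tautological rank-$2$ subbundle of $\Gr(2,W)$, hence an honest rank-$2$ $G$-linearized vector bundle, so $\bP(\cU_X)\sim_G X\times\bP^1$ with trivial action on $\bP^1$. For the $q$-side we must understand the fibres of $q$. A point of $\bP(\cU_X)$ is a pair $(P,[w])$ with $[P]\in X$ and $w\in P$, and $q(P,[w])=[w]$. For fixed $[w]\in\bP(W)$, a plane $P\ni w$ with $[P]\in X=\Gr(2,W)\cap\bP(V)$ is the same datum as a line in $W/\langle w\rangle$ spanned by some $w'$ with $w\wedge w'\in V$; introducing the linear map
$$
\phi_w\colon W\longrightarrow \wedge^2(W)/V,\qquad w'\longmapsto w\wedge w'\ \mathrm{mod}\ V,
$$
which has $\langle w\rangle$ in its kernel and satisfies $\dim\ker\phi_w\ge n-r$ for all $w$, we obtain $q^{-1}([w])\cong\bP(\ker\phi_w/\langle w\rangle)$. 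Hypothesis $(\ast)$ --- irreducibility of $X$ together with its dimension being the expected $2(n-2)-r$ --- forces, by the standard dimension count on the incidence variety $\bP(\cU_X)$ (which has dimension $\dim X+1$ and surjects onto $X$ with one-dimensional fibres), that for $[w]$ in a $G$-stable dense open $U_W\subseteq\bP(W)$ the map $\phi_w$ is surjective; hence $\dim\ker\phi_w=n-r$ there and $q^{-1}([w])\cong\bP^{n-2-r}$. In particular $q$ is dominant, and over $U_W$ it is the projectivization of the rank-$(n-1-r)$ $G$-linearized vector bundle $\mathcal{Q}$ whose fibre over $[w]$ is $\ker\phi_w/\langle w\rangle$.

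Now apply the no--name lemma twice. Since $G$ acts generically freely on $\bP(W)$, it does so on $U_W$, and
$$
\bP(\cU_X)\ \sim_G\ q^{-1}(U_W)\ =\ \bP(\mathcal{Q})\ \sim_G\ U_W\times\bP^{n-2-r}\ \sim_G\ \bP(W)\times\bP^{n-2-r},
$$
with trivial action on the second factor; combining with $\bP(\cU_X)\sim_G X\times\bP^1$ yields
$$
X\times\bP^1\ \sim_G\ \bP(W)\times\bP^{n-2-r}
$$
with trivial actions on the second factors, which is the assertion.

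The main obstacle is the middle step: showing that $q$ is, birationally and $G$-equivariantly, a Zariski-locally trivial $\bP^{n-2-r}$-bundle over a $G$-stable dense open of $\bP(W)$. This is exactly where $(\ast)$ enters --- it must be used to guarantee both that $q$ is dominant and that its generic fibre has the expected dimension $n-2-r$ (equivalently, that $\phi_w$ has maximal rank for general $w$, so the locus where $\phi_w$ drops rank is a proper $G$-stable closed subset of $\bP(W)$); the $G$-stability of all relevant open sets is automatic since they are defined by $G$-invariant conditions. Once this bundle structure, and the elementary one for $p$, are in hand, the two applications of the no--name lemma are routine.
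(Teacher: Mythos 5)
Your proof is correct and takes essentially the same route as the paper's: the No-name Lemma on the $p$-side, identification of the $q$-fibres as $\bP(\ker\phi_w/\langle w\rangle)\cong\bP((w\wedge W)\cap V)$, and assumption $(\ast)$ together with the fibre-dimension theorem to see that $q$ is generically the projectivization of a $G$-vector bundle over $\bP(W)$, followed by a second application of the No-name Lemma. One small attribution quibble: dominance of $q$ comes from the nonemptiness of \emph{every} fibre, i.e.\ from the bound $r\le n-2$ (as the paper notes), rather than from $(\ast)$ itself, but since you record $\dim\ker\phi_w\ge n-r$ for all $w$, all the needed ingredients are present and the argument stands.
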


\begin{proof}
By the No-name Lemma, $X\times \bP^1 \sim_G \bP(\cU_X)$. 
Note that each fiber of $q$ is nonempty: indeed, the fiber over $[w] \in \bP(W)$ is $\bP(w\wedge W)\cap \bP(V)$, which has dimension $\ge n-2-r \ge 0$, the last inequality by the assumption $r\le n-2$. By assumption $(\ast)$, it follows that for generic $w\in W$, one has 
    $$
    \dim(\bP(w\wedge W)\cap \bP(V))= n-2-r.
    $$
Thus $\bP(\cU_X)$, which is irreducible since $X$ is, is generically the projectivization of a $G$-vector bundle over $\bP(W)$ via $q$. Another application of the No-name Lemma yields the result. 
\end{proof}


\begin{rema}
If we drop assumption $(\ast)$, but keep assuming $r\le n-2$, then the construction of Proposition \ref{prop:stable-rat} still yields stable linearizability for the unique component of $X$ such that the restriction of $\bP(\mathcal{U}_X)$ to it dominates $\bP(W)$. But proving nonlinearizability of such a component of $X$ is usually difficult, unless we assume a condition similar to $(\ast)$, a priori. 
\end{rema}

This construction works also over nonclosed fields. However,
there one does not gain new insights:
by \cite[Theorem 2.2.1]{Xu}, if $r\le n-2$ and $X$ is smooth then $X$ is already rational over $k$. The proof uses the same diagram, restricted to a codimension one linear subspace $\Pi$ in $\bP(W)$, exhibiting $X$ as birational to a vector bundle over $\Pi$, thus rational over $k$.  
In presence of group actions, this can fail, e.g., if $W$ does not admit a subrepresentation of codimension one! 
This yields many examples of nonlinearizable but stably linearizable actions.

\begin{exam}
\label{exam:s5}
Let $G=\fS_5$, and $W:=W_5$ its 5-dimensional representation. We have a decomposition
$$
\wedge^2(W)=W_6\oplus W_4,
$$
as representations. 
When $V=W_6$ is the 6-dimensional subrepresentation, 
$$
S:=\Gr(2,W)\cap \bP(V)
$$ 
is the del Pezzo surface of degree 5. 
It is easy to see that the induced $G$-action on $S$ is not linearizable, indeed, $\fS_5$ does not admit a linear action on $\bP^2$. Even the restriction to $\fA_5\subset \fS_5$ is not linearizable,  
see, e.g., \cite[Theorem 6.6.1]{CS}.

Note that the assumptions of 
Proposition~\ref{prop:stable-rat} are {\em not} fulfilled, we have $n=5$ and $r=4$, rather than $r\le 3$. 
Nevertheless, by \cite[Proposition 4.7]{pro}, $S\times \bP^1$ is $\fS_5$-equivariantly birational to the Segre cubic threefold, with the action of the {\em nonstandard} $\fS_5\subset \fS_6$, which is linearizable.
An alternative proof of stable linearizability of $S$, using the equivariant torsor formalism, is in \cite[Proposition 20]{HT-stable}.
\end{exam}

\begin{exam}
\label{exam:a5}
We modify the previous example, considering $G=\fA_5$. Then 
$$
\wedge^2(W)=W_3\oplus W_3'\oplus W_4,
$$
and we put $V:=W_3\oplus W_4$. Then
$$
X:=\Gr(2,W)\cap \bP(V)
$$
is a smooth threefold \cite[Lemma 7.1.1]{CS}, the quintic Del Pezzo threefold. One of the main results of \cite{CS} is that $X$ is $G$-birationally rigid. 

Here, the construction of Proposition~\ref{prop:stable-rat} applies, and we obtain
$$
X\times \bP^1 \sim_G \bP(W). 
$$
Thus, $X\notin\Lin$ but $X\in \SLin$. 

To check the condition $(\ast )$ in Proposition~\ref{prop:stable-rat} and the smoothness of $X$ (independently of \cite{CS}), one can proceed as follows: we view $\mathfrak{A}_5$ as a subgroup of $\mathrm{PSL}_2$ and  
proceed in terms of $\mathrm{PSL}_2$-representations, as in \cite[Section 7]{CS}. Put $W=\mathrm{Sym}^4 (k^2)$ and 
consider the decomposition
\begin{equation} \label{eqn:split}
\wedge^2 (W) = \mathrm{Sym}^2 (k^2) \oplus \mathrm{Sym}^6 (k^2). 
\end{equation}
 Let $$\mathbf{H}=\begin{pmatrix} 1 & 0 \\ 0 & -1 \end{pmatrix}, \mathbf{X}=\begin{pmatrix} 0 & 1 \\ 0 & 0 \end{pmatrix}, \mathbf{Y}=\begin{pmatrix} 0 & 0 \\ 1 & 0 \end{pmatrix}$$ be the standard basis of the Lie algebra of $\mathrm{SL}_2$ satisfying
\[
[\mathbf{H}, \mathbf{X}]=2\mathbf{X}, \: [\mathbf{H}, \mathbf{Y}]=-2\mathbf{Y}, \: [\mathbf{X}, \mathbf{Y}]=\mathbf{H}.
\]
Let $w_4$ be a highest weight vector in $W$ (subscripts in the sequel indicate the weight). Thus $\mathbf{X}(w_4)=0$, and 
\[
w_4, \: w_2 =\mathbf{Y}(w_4), \: w_0 = \mathbf{Y}^2(w_4), \: w_{-2} = \mathbf{Y}^3(w_4), \: w_{-4} = \mathbf{Y}^4(w_4)
\]
form a basis for $W$; using the commutation relations inductively gives 
\[
\mathbf{X}(w_4)=0, \! \mathbf{X}(w_2)=4w_4, \! \mathbf{X}(w_0)=6w_2, \! \mathbf{X}(w_{-2})=6w_0, \! \mathbf{X}(w_{-4})=4w_{-2}. 
\]
 
To find a highest weight vector in the subrepresentation $\mathrm{Sym}^2 (k^2)$ of $\wedge^2 (W)$ one is looking for a linear combination of $w_4\wedge w_{-2}$ and $w_2\wedge w_0$ annihilated by $\mathbf{X}$. These are thus multiples of 
\[
x_2:=3 w_2\wedge w_0 - 2 w_4 \wedge w_{-2},
\]
and applying $\mathbf{Y}$ and $\mathbf{Y}^2$ to $x_2$ we obtain a basis for $\mathrm{Sym}^2 (k^2)$ as a submodule of $\wedge^2 (W)$ as
\begin{gather*}
x_0:=\mathbf{Y}(x_2)= \mathbf{Y}\left( 3 w_2\wedge w_0 - 2 w_4 \wedge w_{-2}\right) \\ = 3 (\mathbf{Y}(w_2)\wedge w_0 + w_2\wedge \mathbf{Y}(w_0)) - 2 (\mathbf{Y}(w_4)\wedge w_{-2} + w_4\wedge \mathbf{Y}(w_{-2}))\\
=3 (w_0\wedge w_0 + w_2\wedge w_{-2}) - 2 (w_{2}\wedge w_{-2} + w_4\wedge w_{-4})\\
= w_2\wedge w_{-2} - 2 w_4\wedge w_{-4}
\end{gather*}
and
\begin{gather*}
x_{-2}:= \mathbf{Y} \left( w_2\wedge w_{-2} - 2 w_4\wedge w_{-4} \right)\\
= w_0\wedge w_{-2} - w_2\wedge w_{-4} . 
\end{gather*}


Similarly, one can find a basis of $\mathrm{Sym}^6(k^2)\subset \wedge^2 (W)$ by applying $\mathbf{Y}$ successively to the highest weight vector $w_4\wedge w_2$ in that copy of $\mathrm{Sym}^6(k^2)$.  

We have thus explicitly identified both $\mathrm{Sym}^2 (k^2)$ and $\mathrm{Sym}^6 (k^2)$ with $\mathrm{PGL}_2$-subrepresentations of $\wedge^2 (W)$, and can check that $$X =\mathrm{Gr}(2, W) \cap \bP (\mathrm{Sym}^6 (k^2))$$ is irreducible, smooth of the expected dimension $3$ by computer algebra. The necessary checks were performed using {\ttfamily Macaulay2}\footnote{{\tt \tiny warwick.ac.uk/fac/sci/maths/people/staff/boehning/m2filesequivariantderived}}.
\end{exam}

\begin{exam}
\label{exam:4}
Let $G=C_9\rtimes C_6$, {\tt G:=SmallGroup(54,6)}.
Its smallest faithful representation has dimension 6, in particular, $G$ does not admit a linear action on $\bP^4$. Let $W$ be its unique irreducible representation of dimension 6, it has character
$$
( 6, 0, -3, 0, 0, 0, 0, 0, 0, 0 ). 
$$
We have a decomposition: 
\begin{equation}
    \label{eqn:wedge}
\wedge^2(W) = V_1\oplus V_1'\oplus V_1''\oplus V_2\oplus V_2'\oplus V_2''\oplus V_6
\end{equation}
into irreducible representations. 
Choose a suitable subrepresentation
$$
V:=V_1\oplus V_2\oplus V_2'\oplus V_6, 
$$
more precisely, that with respective characters, for $\zeta=\zeta_3$,

\begin{align*}
{\tt X.6}=& ( 1, -1, 1, \zeta^2, \zeta, -\zeta, -\zeta^2, \zeta, 1, 
\zeta^2),
\\
{\tt X.7}=&( 2, 0, 2, 2, 2, 0, 0, -1, -1, -1 ),
\\
{\tt X.8}=&( 2, 0, 2, 2\zeta, 2\zeta^2, 0, 0, -\zeta^2, -1, -\zeta),
\\
{\tt X.10}=&(6, 0, -3, 0, 0, 0, 0, 0, 0, 0 ).
\end{align*}
The complement decomposes as
\begin{align*}
{\tt X.2}=& (1, -1, 1, 1, 1, -1, -1, 1, 1, 1 ), \\
{\tt X.3}=& ( 1, -1, 1, \zeta, \zeta^2, -\zeta^2, -\zeta, \zeta^2, 1, \zeta),\\
{\tt X.9}=& ( 2, 0, 2, 2\zeta^2, 2\zeta, 0, 0, -\zeta, -1, -\zeta^2).
\end{align*}
Then, according to {\tt magma}, 
$$
X:=\Gr(2,W)\cap \bP(V)
$$
is a smooth and irreducible variety of dimension 4 and degree 14. Note that choosing a different 11-dimensional subrepresentation $V$ also yields irreducible fourfolds of degree 14, but some of these are singular. 
Thus the construction of Proposition~\ref{prop:stable-rat} applies, and we have
$X\notin \Lin$ and $X\in \SLin$.

Let $G=C_3^3\rtimes \fS_3$, {\tt SmallGroup(162,19)}. Its smallest faithful representation has dimension 6, in particular, $G$ does not admit a linear action on $\bP^4$. 
Let $W$ be an irreducible $G$-representation with character
$$
( 6, 0, -3, 0, 0, 3, -3, 0, 0, 0, 0, 0, 0 ). 
$$
We have a decomposition
$$
\wedge^2(W)=V_1\oplus V_2\oplus V_3\oplus V_3'\oplus V_6
$$
into irreducible representations. We choose 
$$
V:=V_2\oplus V_3\oplus V_6. 
$$
Then 
$$
X:=\Gr(2,W)\cap \bP(V)
$$
is irreducible (singular) of dimension 4 as can be checked by computer algebra\footnote{{\tt \tiny warwick.ac.uk/fac/sci/maths/people/staff/boehning/m2filesequivariantderived}}. Therefore this construction satisfies the hypotheses of Proposition \ref{prop:stable-rat}, thus $X\notin \Lin$ but $X\in \SLin$.

\end{exam}

\section{Derived categories}
\label{sect:derived}

\subsection*{Terminology}
Let $X$ be a smooth projective variety (over an algebraically closed field $k$ of characteristic zero) and $\rD^b (X)$ its derived category of coherent sheaves. We use freely the following terms; see, e.g., \cite[Section 2]{BBT23}, or \cite[Section 1 and 2]{kuz-der-view} for definitions and references: 
\begin{itemize}
\item admissible subcategories of $\rD^b (X)$,
\item exceptional objects, 
\item (full) exceptional sequences, 
\item (maximal) semiorthogonal decompositions.
\end{itemize}

\subsection*{$G$-actions on categories}

Let $G$ be an algebraic group, not necessarily finite. Let $X$ be a smooth projective $G$-variety, i.e., a smooth projective variety with a generically free, regular, action of $G$.  
In \cite[Proposition 3]{BBT23} it was remarked that the fundamental reconstruction theorem by Bondal and Orlov \cite{BO} admits the following equivariant version:

\begin{prop}
\label{prop:EquivRecon}
Suppose $X$ and $Y$ are smooth projective $G$-varieties over $k$, $X$ is Fano, and 
\[
\Phi\colon \rD^b (X) \simeq \rD^b (Y)
\]
is an equivalence as $k$-linear triangulated categories together with the induced $G$-actions. Then $X$ and $Y$ are isomorphic as $G$-varieties, i.e., there exists a $G$-equivariant isomorphism
\[
X \stackrel{\sim}{\lra} Y. 
\]
\end{prop}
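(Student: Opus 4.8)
The plan is to reduce the equivariant reconstruction statement to the non-equivariant Bondal--Orlov theorem by exploiting the canonical structures that both the equivalence and the $G$-actions preserve. First I would recall that, by Bondal--Orlov \cite{BO}, since $X$ is Fano (hence has ample anticanonical bundle), the category $\rD^b(X)$ determines $X$ as a variety: more precisely, the set of point objects $\mathcal{O}_x[n]$ and the Serre functor $S_X = - \otimes \omega_X[\dim X]$ are intrinsic to $\rD^b(X)$, so one recovers the underlying set of $X$, its Zariski topology, and the structure sheaf $\mathcal{O}_X$ together with the canonical bundle $\omega_X$. An equivalence $\Phi\colon \rD^b(X)\simeq\rD^b(Y)$ then forces $Y$ to be Fano as well, and produces an isomorphism of varieties $\varphi\colon X\to Y$ such that $\Phi$ is, up to a shift and a twist by a line bundle, the pushforward along $\varphi$; this is the content of the reconstruction theorem as refined by Bondal--Orlov (any such $\Phi$ has the form $\varphi_* (- \otimes L)[m]$ for a line bundle $L$ on $X$ and $m\in\bZ$).

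Next I would bring in the $G$-actions. The hypothesis is that $\Phi$ intertwines the $G$-actions on $\rD^b(X)$ and $\rD^b(Y)$, meaning that for every $g\in G$ the autoequivalences $g^*_X$ of $\rD^b(X)$ and $g^*_Y$ of $\rD^b(Y)$ satisfy $\Phi\circ g^*_X \cong g^*_Y\circ\Phi$ (compatibly, as part of the data of a $G$-action on the categories). Since the reconstruction of the pair $(X, \mathcal{O}_X)$ from $\rD^b(X)$ is canonical — point objects and the Serre functor are preserved by any exact equivalence — the autoequivalence $g^*_X$ must, under this reconstruction, correspond to an automorphism of the variety $X$, and similarly for $Y$. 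Concretely: $g^*_X$ sends skyscrapers to skyscrapers (it is of "Fourier--Mukai" form associated to the action automorphism of $X$), so the induced bijection on point objects is exactly the action of $g$ on $X$; likewise on $Y$. The compatibility $\Phi\circ g^*_X\cong g^*_Y\circ\Phi$ then says that the variety isomorphism $\varphi\colon X\to Y$ extracted from $\Phi$ intertwines the action of $g$ on $X$ with that on $Y$, for every $g$. Hence $\varphi$ is $G$-equivariant.

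The main obstacle I expect is the bookkeeping needed to make precise the statement that $\Phi$ "preserves the $G$-action" and to extract from it a genuine equivariant isomorphism of varieties rather than merely a $G$-equivariance statement up to isomorphism of functors. One has to be careful about what datum a $G$-action on a triangulated category is (a homomorphism from $G$ to the group of isomorphism classes of autoequivalences, together with — if one wants more — a choice of coherence isomorphisms), and about the fact that the reconstruction theorem only determines the Fourier--Mukai kernel of $\Phi$ up to the ambiguities $L$ and $m$ above. The point is that these ambiguities do not affect the induced map on the set of point objects: twisting by a line bundle and shifting both act trivially on $\{\mathcal{O}_x\}/(\text{shift})$, so the bijection $X\to Y$ underlying $\Phi$ is canonical, and the equivariance argument only uses this bijection. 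So the proof is genuinely a short reduction: once one checks that each $g^*$ on either side is induced by the corresponding variety automorphism — which follows because any autoequivalence of $\rD^b(X)$ for $X$ Fano is of the standard form, so in particular the autoequivalence coming from an action automorphism of $X$ induces that same automorphism on reconstructed points — the equivariance of $\varphi$ is immediate from the intertwining relation. I would spell out these two checks (standard form of $g^*$; invariance of the point-object bijection under $L$ and $m$) and then conclude.
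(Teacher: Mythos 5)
Your argument is correct and takes essentially the same route as the paper, which gives no independent proof but invokes exactly this observation (via \cite[Proposition 3]{BBT23}): the Bondal--Orlov reconstruction \cite{BO} for the Fano variety $X$, the standard form $\varphi_*(-\otimes L)[m]$ of the equivalence, and the canonicity of the induced bijection on point objects combine with the intertwining relation to give a $G$-equivariant isomorphism. The bookkeeping issues you flag (the twist/shift ambiguity and compatibility only up to isomorphism of functors) are immaterial for the conclusion, exactly as you explain.
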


In practice, this general theorem is not very useful since the derived category contains too much information; in the context of rationality problems, the focus is on trying to 
extract information about the variety from more accessible data, such as a piece, or several pieces, in a semiorthogonal decomposition of $\rD^b (X)$.

We will explore the extent to which these considerations
apply in the equivariant context. We investigate, in several representative geometric examples, the 
effects of $G$-equivariant birationalities on 
\begin{itemize}
\item 
the existence of full exceptional sequences in $\rD^b (X)$ that are compatible with $G$-actions, and 
\item 
derived Hom-spaces between objects in $\rD^b (X)$.
\end{itemize}



\subsection*{$G$-actions and exceptional sequences}

\begin{defi}\label{def:linearizedobject}
An object $E\in\rD^b (X)$ is called \emph{$G$-invariant} if $g^*E$ is isomorphic to $E$, for all $g\in G$. 
It is called 
\emph{$G$-linearized} if it is equipped with a $G$-linearization, i.e. a system of isomorphisms
\[
\lambda_g\colon E \to g^* E, \quad \forall g\in G,  
\]
satisfying the compatibility condition 
\[
\lambda_{1} = \mathrm{id}_{E}, \quad \lambda_{gh}= h^*(\lambda_g)\circ \lambda_h . 
\]
\end{defi}

Several notions of compatibility of exceptional sequences with $G$-actions have been studied; 
we follow \cite[Definition 2.1]{CaTe20}.

\begin{defi}\label{defi:EquivExcColl}
Let $X$ be a smooth projective $G$-variety and 
$$
{\bf E}:=(E_1, \dots , E_n)
$$ 
a full exceptional sequence in $\rD^b (X)$. 
\begin{enumerate}
\item
${\bf E}$ is \emph{$G$-invariant} if for every $r\in \{1, \dots , n\}$ and every $g\in G$, there is an $s$ such that $g^* E_r\simeq E_s$. 
\item
${\bf E}$ is $G$-\emph{equivariant} if it is 
$G$-invariant and, for all $r$, $E_r$ is isomorphic to a $G_r$-linearized object in $\rD^b (X)$, where $G_r\subseteq G$ is the stabilizer of the isomorphism class of $E_r$. 
\item 
${\bf E}$ is $G$-\emph{linearized} if 
it is $G$-equivariant and, for all $r$,  
$G_r=G$, i.e., each $E_r$ is a $G$-linearized object. 
\end{enumerate}
\end{defi}

\begin{exam}
\label{exam:EquivSequences}
Consider $X=\bP^1 \times \bP^1$, and 
the full exceptional sequence in $\rD^b (X)$ 
from \cite{Kap88} 
\[
{\bf E}= (\cO (-1,-1), \cO, \cO (1,0), \cO(0,1)).
\]
Then 
\begin{itemize}   
\item ${\bf E}$ is $H$-linearized, if we view $X$ as $\bP (V)\times \bP(V)$ with its natural diagonal $H$-action, where $H$ is a finite group admitting a two-dimensional faithful linear representation $V$ such that $H$ acts generically freely on $\bP (V)$.
\item  ${\bf E}$ is $G$-equivariant, but not $G$-linearized,  
if we let $G= \bZ/2 \times H$ act on $X= \bP (V)\times \bP(V)$, with the first factor $\bZ/2$ in $G$ switching the rulings,
\item ${\bf E}$ is $G$-invariant, but not $G$-equivariant, if 
we let $H \simeq \bZ/2\times \bZ/2$ act on $\bP^1$ via the two-dimensional faithful irreducible representation of its Schur cover $\mathfrak D_8$ instead, and then let $G= \bZ/2\times H$ act on $\bP^1 \times \bP^1$, again with $\bZ/2$ switching the factors and $H$ acting diagonally.
\end{itemize}
\end{exam}

\begin{exam}
\label{exam:CT}
The moduli space $\overline{\mathcal{M}}_{0,n}$ of stable rational curves with $n$ marked points has a full $\mathfrak{S}_n$-equivariant exceptional sequence, where $\mathfrak{S}_n$ is the symmetric group permuting the marked points, by the main result of \cite{CaTe20}. 
\end{exam}

The following observation will be useful in applications.

\begin{lemm}\label{lemm:InvEquiv}
Let $X$ be a smooth projective $G$-variety, and 
$$
{\bf E}:=(E_1, \dots , E_n)
$$ 
a $G$-invariant exceptional sequence consisting of line bundles. If $X$ is $G$-linearizable, then ${\bf E}$ is a $G$-equivariant exceptional sequence.
\end{lemm}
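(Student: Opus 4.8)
The plan is to reduce the statement to the fact that a $G$-linearizable variety has no Brauer obstruction, so that $G$-invariant line bundles are automatically $G$-linearizable, and then to assemble the individual linearizations on the orbits of the $E_r$ into a $G_r$-linearized structure. First I would recall that $G$-linearizability of $X$ forces the Amitsur invariant to vanish (stated in the excerpt's discussion of classical invariants): for every subgroup $H\subseteq G$ the map $\delta_2\colon \Pic(X)^H\to \rH^2(H,k^\times)$ is zero. Applying this with $H=G_r$, the stabilizer of the isomorphism class of the line bundle $E_r$: since $E_r$ is $G_r$-invariant, its class lies in $\Pic(X)^{G_r}$, and $\delta_2([E_r])=0$ means exactly that $E_r$ admits a $G_r$-linearization. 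This is the crucial input, and the only place where the hypothesis ``$X$ is $G$-linearizable'' (as opposed to merely ``${\bf E}$ is $G$-invariant'') is used.

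Next I would note that $G$-invariance of the sequence ${\bf E}$ means $G$ permutes the set $\{[E_1],\dots,[E_n]\}$ of isomorphism classes; write the orbits of this action. To verify $G$-equivariance in the sense of Definition~\ref{defi:EquivExcColl}(2), it suffices to produce, for each $r$, a $G_r$-linearized object isomorphic to $E_r$ — which is precisely what the previous paragraph gives. One should also check that the Amitsur/$\delta_2$ argument genuinely applies to line bundles on $X$ itself (rather than on a standard model): since $X$ is assumed $G$-linearizable and the Amitsur invariant is a $G$-birational invariant computed on $X$, and $\Pic(X)^{G_r}$ is where the class sits, there is nothing extra to do; the vanishing on the linear model $\bP(V)$ propagates back to $X$.

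There is a subtlety I would address explicitly: the definition of $G$-equivariant sequence only requires each $E_r$ to be \emph{isomorphic} to a $G_r$-linearized object, not that a single coherent choice of linearizations across the whole orbit be made in a compatible way; so no cocycle-gluing across different orbit elements is needed, and the argument is genuinely pointwise in $r$. The main obstacle is therefore conceptual rather than computational: it is identifying that ``$G$-invariant line bundle on a $G$-linearizable variety'' $\Rightarrow$ ``$G_r$-linearizable'' is exactly the vanishing of the restricted Amitsur invariant, and citing the correct form of that vanishing from the classical-invariants discussion. Once that identification is in place the proof is essentially one line per $E_r$. I would close by remarking that the hypothesis that the $E_r$ are line bundles is what makes $\delta_2$ the only obstruction; for higher-rank exceptional objects the relevant obstruction group need not vanish even when $X$ is linearizable, which is consistent with the examples discussed later in the paper.
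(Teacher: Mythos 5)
Your proposal is correct and follows essentially the same route as the paper: linearizability of $X$ (hence of every subgroup action) forces the Amitsur invariant $\mathrm{Am}(X,H)$ to vanish for all $H\subseteq G$, and applying this with $H=G_r$ shows each invariant line bundle $E_r$ is $G_r$-linearized, which is exactly Definition~\ref{defi:EquivExcColl}(2). Your extra remarks (that the verification is pointwise in $r$ and that the line-bundle hypothesis is what makes $\delta_2$ the only obstruction) are accurate refinements of the paper's two-line argument.
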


\begin{proof}
If $X$ is $G$-linearizable, $\mathrm{Am}(X, G)$ is trivial. The same holds for $\mathrm{Am}(X, H)$, for any $H\subseteq G$, since $X$ is also $H$-linearizable. Therefore, under the assumptions of the Lemma, any line bundle on $X$ is $H$-linearized for every subgroup $H$ that leaves this line bundle  invariant. 
\end{proof}

\subsection*{Connections with classical invariants}

\begin{prop}\label{pro:LinearizedExceptional}
Let $G$ be a finite group and $X$ a smooth projective $G$-variety admitting a $G$-linearized full exceptional sequence. Then 
$$
\mathrm{Pic}(X,G)\twoheadrightarrow \mathrm{Pic}(X)^G,
$$
in particular,
$$
\mathrm{Am}(X,G)=0.
$$
\end{prop}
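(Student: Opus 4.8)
The plan is to produce, from a $G$-linearized full exceptional sequence $(E_1,\dots,E_n)$, an exact sequence of $G$-modules witnessing the surjectivity, and then feed this into the exact sequence from \S\ref{sect:equi}. First I would recall that a full exceptional sequence determines a semiorthogonal decomposition $\rD^b(X)=\langle E_1,\dots,E_n\rangle$, hence an identification of the Grothendieck group: $\rK_0(X)$ is free with basis the classes $[E_1],\dots,[E_n]$. Since each $E_r$ carries a $G$-linearization, each class $[E_r]$ lifts to the equivariant Grothendieck group $\rK_0^G(X)=\rK_0([X/G])$, and because the $[E_r]$ form a $\bZ$-basis of $\rK_0(X)$, the forgetful map $\rK_0^G(X)\to \rK_0(X)$ is \emph{surjective}. (It need not be injective, but surjectivity is all we need.)

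Next I would pass from $\rK$-theory to Picard groups via the first Chern class, or more directly via determinants of perfect complexes. The determinant gives a homomorphism $\det\colon \rK_0(X)\to \Pic(X)$, and equivariantly $\det\colon \rK_0^G(X)\to \Pic(X,G)$, compatible with the forgetful maps. The point is that $\det\colon\rK_0(X)\to\Pic(X)$ is surjective (a line bundle $L$ is its own determinant), and the same holds equivariantly. Now take any $L\in\Pic(X)^G$. Viewing $L\in\Pic(X)\subseteq$ image of $\rK_0(X)$, lift $[L]\in\rK_0(X)$ to some class $\alpha\in\rK_0^G(X)$ by surjectivity of the forgetful map; then $\det(\alpha)\in\Pic(X,G)$ maps to $\det[L]=L$ under $\Pic(X,G)\to\Pic(X)$. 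This shows the composite $\Pic(X,G)\to\Pic(X)$ hits $L$. The only remaining subtlety is that we want the image inside $\Pic(X)^G$, which is automatic since the image of $\Pic(X,G)$ in $\Pic(X)$ always lands in $\Pic(X)^G$ (a linearizable bundle is in particular invariant). Hence $\Pic(X,G)\twoheadrightarrow\Pic(X)^G$.

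Finally, the statement about the Amitsur invariant is immediate from the exact sequence quoted in \S\ref{sect:equi}:
\[
\Pic(X,G)\lra \Pic(X)^G \stackrel{\delta_2}{\lra} \rH^2(G,k^\times).
\]
If the first map is surjective, then $\delta_2=0$, and since $\mathrm{Am}(X,G)$ is by definition the image of $\delta_2$ on $\Pic(X)^G$, we get $\mathrm{Am}(X,G)=0$.

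The main obstacle I anticipate is the careful handling of the equivariant determinant map and the claim that the forgetful map $\rK_0^G(X)\to\rK_0(X)$ is surjective given that a $\bZ$-basis lifts; one must be sure that $G$-linearizations compose/behave well enough that the classes $[E_r]\in\rK_0^G(X)$ are literally defined (they are, once a linearization is fixed) and that no completion or topological subtlety intervenes for a smooth projective variety (it does not, since everything is finitely generated and $X$ is smooth, so $\rK_0=\rK^0$ of perfect complexes, and $\det$ of a perfect complex is a well-defined line bundle, functorial in the $G$-equivariant setting). Getting these compatibilities stated cleanly — rather than any deep input — is where the care is needed.
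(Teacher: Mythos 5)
Your argument is correct and follows essentially the same route as the paper: both use that the $G$-linearized exceptional objects generate $\rK_0(X)$ and then pass to $\Pic$ via the determinant/first Chern class, the only difference being that you package this through the forgetful map $\rK_0^G(X)\to\rK_0(X)$ while the paper works directly with the $G$-linearized classes $c_1(E_r)$. The technical point you flag at the end --- that the equivariant determinant/first Chern class is well defined on linearized complexes --- is exactly what the paper verifies, by resolving a $G$-linearized sheaf by $G$-linearized vector bundles using a $G$-linearized ample sheaf, so there is no gap.
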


\begin{proof}
Taking the first Chern class gives a well-defined homomorphism
\[
c_1\colon \rD^b (X) \to \mathrm{Pic}(X).
\]
If $\rD^b (X)$ is generated by an exceptional sequence ${\bf E}=(E_1, \dots , E_n)$ of $G$-linearized objects, then every class in $\mathrm{Pic}(X)$ is a $\bZ$-linear combination of the $c_1 (E_r)$, which are $G$-linearized. Indeed, the first Chern class of a $G$-linearized complex is $G$-linearized; it is the alternating sum of the Chern classes of the cohomology sheaves which are $G$-linearized, so we just need to show invariance of the Chern class  for a $G$-linearized sheaf. Such a sheaf always has a finite locally free resolution by $G$-linearized vector bundles since there exists a $G$-linearized ample sheaf on $X$ and the statement is true on projective space. 
\end{proof}

\begin{rema} 
\label{rema:conc}
Let $G$ be a finite group and $X$ a smooth projective $G$-variety with ample anticanonical class. By Proposition~\ref{prop:EquivRecon}, the derived category $\rD^b(X)$ determines $X$, as a $G$-variety. In particular, we can extract the $G$-action on $\Pic(X)$, and determine whether or not it satisfies 
{\bf (H1)} or {\bf (SP)}. Concretely, we have
$$
\Aut(\rD^b(X)) = (\Pic(X)\times \bZ)\rtimes \Aut(X),  
$$
and the derived automorphisms acting trivially on point objects can be identified with $\Pic(X)$. 
\end{rema}

In the following sections we investigate connections between existence of full exceptional sequences with various compatibility properties with the $G$-action, and (stable) linearizability of $X$. 
It turns out that $G$-linearizability often implies the existence of a full equivariant exceptional sequence, provided such sequences exist in the non-equivariant setting, as for Del Pezzo surfaces.

\section{Del Pezzo surfaces}
\label{sect:dp}

\subsection*{Terminology}
By the Minimal Model Program, every rational surface is birational to a conic bundle over $\bP^1$ or  
a Del Pezzo surface, i.e., a smooth projective surface $X$ with ample anticanonical class $-K_X$; we let 
$$
d=d(X)=(-K_X)^2
$$ 
be its degree.  The same holds over nonclosed field, and in presence of group actions. 

Here and below {\em conic bundle} means that $X$ is smooth and all fibers of 
$f \colon X \to \bP^1$ are isomorphic to reduced conics in $\bP^2$. We recall the terminology of 
\cite{DI, pro-II}: a conic bundle $f \colon X \to \bP^1$ is called {\em exceptional} if for some positive integer $g$ the number of degenerate fibers
equals $2g + 2$ and there are two disjoint sections $C_1$ and $C_2$ with $C_1^2=C_2^2 =-(g+1)$. 
Exceptional conic bundles can be constructed explicitly, see \cite[\S 5.2]{DI}.

\subsection*{Nonlinearizable actions}
In this section, $G$ is a finite group. 
The following nonlinearizability results
for $G$-conic bundles
are probably known to experts in birational rigidity; here, we rely on the Burnside formalism. 

\begin{lemm}\label{lemm:conic1}
Let $X\to \bP^1$ be a relatively minimal $G$-conic bundle with $K_X^2=1$. Then $X$ is not linearizable. 
\end{lemm}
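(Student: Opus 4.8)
The plan is to use the Burnside formalism recalled above, exhibiting in $[X \actsfromright G] \in \Burn_2(G)$ an incompressible divisorial symbol that cannot appear for a linear (or projective linear) $G$-action on $\bP^2$. Concretely, I would first analyze the geometry of a relatively minimal $G$-conic bundle $f\colon X \to \bP^1$ with $K_X^2 = 1$. Since $K_X^2 = 8 - (\text{number of degenerate fibers})$, such a conic bundle has exactly $7$ degenerate fibers, and the lattice/cohomological constraints (together with relative $G$-minimality, which forces $G$ to act transitively on certain sets of $(-1)$-curves and to not permit a $G$-equivariant contraction to a point or to a smaller conic bundle) pin down the possibilities for the $G$-action quite rigidly — I would invoke the classification in \cite{DI} of minimal $G$-conic bundles to say that such an $X$ is an exceptional conic bundle (in the sense recalled above, with $2g+2 = 7$ impossible over $\mathbb Z$, so in fact one must look more carefully — see the obstacle below) or otherwise carries a canonical $G$-stable configuration.

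\textbf{Finding the distinguishing symbol.} The key step is to locate a subgroup $H \subseteq G$ and an $H$-fixed (or $H$-stable with appropriate stabilizer) irreducible curve $C \subset X$ of genus $\ge 1$, or more generally a divisorial symbol $(H, Y \actsfromleft k(C), (\beta))$ that is incompressible and does not occur in the Burnside class of any linear action on $\bP^2$. A natural candidate is a bisection or multisection of $f$ with nontrivial stabilizer: the double cover of $\bP^1$ branched at the degenerate fibers, which for an exceptional conic bundle is a curve of genus $g$, and in the minimal relatively-$G$-conic-bundle situation with small $K_X^2$ this curve (or a component of the ramification locus of some involution in $G$) will have positive genus. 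Then I would appeal to the fact, used repeatedly in \cite{TYZ, KT-vector}, that symbols $(H, 1 \actsfromleft k(C), (\beta))$ with $C$ of positive genus are incompressible and never arise from $[\bP(V) \actsfromright G]$, which contains only symbols whose function fields are rational (being subfields of $k(\bP^2)$ cut out by linear conditions, hence rational of dimension $\le 1$). Comparing with the class $[\bP^2 \actsfromright G]$ for every linear $G$-action on $\bP^2$ (equivalently, invoking that no such symbol can appear there) then yields $X \notin \PLin$, a fortiori $X \notin \Lin$.

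\textbf{Main obstacle.} The delicate point is the case analysis: $K_X^2 = 1$ allows $G$-conic bundles that are not of the cleanest "exceptional" type (indeed $2g+2 = 7$ is odd, so the degenerate fibers cannot pair up into a single exceptional structure), so I must be careful to cover all relatively minimal $G$-conic bundles with seven degenerate fibers, using \cite{DI} to enumerate which finite groups $G$ act and with what fixed-point data. For each, I need to verify that there genuinely is a nontrivial-stabilizer curve of positive genus (or another incompressible symbol) — the worst scenario would be a $G$-action where all stabilized curves are rational and all symbols are "trivial-looking," in which case the Burnside-class argument would be inconclusive and one would need a supplementary argument (e.g. via the cohomological or Amitsur invariants, or via birational rigidity results for del Pezzo surfaces of degree $1$, to which such conic bundles are linked). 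I expect, however, that the seven degenerate fibers force enough structure — in particular a $G$-stable hyperelliptic-type curve — that the incompressible-symbol argument goes through uniformly, so the real work is bookkeeping over the list in \cite{DI} rather than a genuinely new idea.
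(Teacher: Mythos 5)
Your overall framework (compare $[X\actsfromright G]$ with $[\bP^2\actsfromright G]$ in $\Burn_2(G)$ via an incompressible divisorial symbol) is the right one, but the engine you propose to drive it — a stabilized curve of positive genus, whose symbol can never occur for a linear action — breaks down in exactly the case that matters. The paper first disposes of the easy case: if {\bf (H1)} fails, $X\notin\Lin$ immediately, no Burnside computation needed. In the remaining case, where {\bf (H1)} holds, Prokhorov's classification (\cite[\S 8, Theorem 8.3, Construction 8.4]{pro-II}, not \cite{DI}) forces $G$ to be the binary dihedral group $\widetilde{\fD}_5$, and there the distinguished central involution $\tau$ fixes a \emph{smooth rational} curve $C$, with the residual $\fD_5$ acting generically freely on it. So the scenario you flag as the "worst case" — all stabilized curves rational, no positive-genus symbol available — is precisely what occurs, and your expectation that the seven degenerate fibers force a $G$-stable hyperelliptic curve of positive genus with nontrivial stabilizer is not borne out (such a curve would in fact be tied to failure of {\bf (H1)}, i.e., to the case already handled trivially).

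The actual distinguishing argument is therefore not "this symbol cannot appear for $\bP^2$" but a \emph{multiplicity count} of a symbol that appears on both sides: the class of $X$ contains the incompressible symbol $(C_2,\ \fD_5\actsfromleft k(\bP^1),\ (1))$ exactly once, whereas any generically free linear $\widetilde{\fD}_5$-action on $\bP^2$ comes from a representation $V_1\oplus V_2$ with a fixed point and an invariant line, and on a standard model contributes this same symbol \emph{twice} (once from the exceptional divisor over the fixed point, once from $\bP(V_2)$). Hence $[X\actsfromright G]\neq[\bP^2\actsfromright G]$. This refined comparison, together with the reduction to the single group $\widetilde{\fD}_5$ via \cite{pro-II}, is the content your proposal is missing; without it, the plan as written stalls exactly where you anticipated it might, and the fallback you gesture at (cohomological invariants, rigidity of degree-one Del Pezzo surfaces) is not what closes the gap.
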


\begin{proof}
If $X$ fails {\bf (H1)}, then 
then $X\notin\Lin$. If $X$ satisfies {\bf (H1)}, then the
classification in \cite[\S 8]{pro-II},  Theorem 8.3, shows that $G$ must be the binary dihedral group $\widetilde{\fD}_5$, a 
nontrivial central $C_2$-extension of $\fD_5$. Such $X$, with the $G$-action, are given by an explicit construction \cite[Construction 8.4]{pro-II}. In particular, there is a distinguished involution $\tau\in G$, generating the center of $G$ and fixing a smooth rational curve $C$. The residual action of $\fD_5=G/\langle \tau\rangle$ is generically free on $C$. Applying the Burnside formalism to this situation, we find a unique, incompressible, symbol
\begin{equation}
    \label{eqn:symb2}
(C_2, \fD_5\actsfromleft k(\bP^1), (1)), 
\end{equation}
contributing to the class 
$$
[X\actsfromright G] \in \Burn_2(G).
$$
A generically free linear action of $\widetilde{\fD}_5$ on $\bP^2$ 
necessarily arises from a representation
$V=V_1\oplus V_2$, where 
$V_1$ is 1-dimensional representation which is nontrivial on $\tau$ and 
$V_2$ is a 2-dimensional representation which is trivial on $\tau$. Then
$G$ fixes the point $p_0:=[1:0:0] \in \bP^2$ and stabilizes the line given by $x_0=0$. Passing to a standard model, we observe, as
in a similar situation in \cite[Section 7.6]{HKT-small}, that
the linear action contributes {\em two} symbols \eqref{eqn:symb2}, one 
from the exceptional divisor of the blowup of $p_0$ and the other from $\bP^1=\bP(V_2)$. 
It follows that 
$$
[X\actsfromright G ] \neq [\bP^2\actsfromright G]
$$
in $\Burn_2(G)$, and the $G$-action on $X$ is not linearizable. 
\end{proof}

\begin{lemm}\label{lemm:conic2}
Let $X$ be a minimal rational $G$-surface that is an exceptional conic bundle with $K_X^2=2$ and $g=2$. Then $X$ is not linearizable. 
\end{lemm}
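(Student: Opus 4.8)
The plan is to follow the template of Lemma~\ref{lemm:conic1} closely, replacing the explicit structural input for $K_X^2=1$ with the corresponding classification for exceptional conic bundles with $K_X^2=2$, $g=2$. First I would record that if $X$ fails \textbf{(H1)} there is nothing to prove, so I may assume \textbf{(H1)} holds. Under this assumption, the classification of minimal rational $G$-conic bundles with \textbf{(H1)} --- again from \cite{pro-II}, Theorem~8.3 and the surrounding analysis of exceptional conic bundles in \cite[\S8]{pro-II}, combined with the construction in \cite[\S5.2]{DI} --- pins down $G$ and the action quite rigidly: for an exceptional conic bundle with $g=2$, one has $2g+2 = 6$ degenerate fibers and two disjoint sections $C_1,C_2$ with $C_i^2 = -3$, and $G$ must be (a central $C_2$-extension of) a specific small group, with a distinguished central involution $\tau$ fixing a smooth rational curve $C$ (one of the sections, or the curve swapping the two sections), on which the residual quotient acts generically freely.

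Next I would run the Burnside formalism on a standard model for this action, exactly as in the proof of Lemma~\ref{lemm:conic1}. The point is that the central involution $\tau$ contributes a divisorial symbol
$$
(C_2, \overline{G}\actsfromleft k(\bP^1),(1))\in \Burn_2(G),
$$
where $\overline{G}=G/\langle\tau\rangle$ acts on the fixed curve $C\cong\bP^1$, and --- because $C$ is a section (or a $2$-section) of the conic bundle with no further curves fixed by $\tau$ --- this symbol occurs with multiplicity \emph{one}, and it is incompressible since $\overline{G}$ acts generically freely (hence nontrivially) on $k(\bP^1)$, so the symbol does not simplify under the relations and lies in the free part generated by incompressible divisorial symbols.

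Then I would compare with $[\bP^2\actsfromright G]$ for any generically free (projectively) linear action of the relevant group $G$. As in Lemma~\ref{lemm:conic1}, a generically free linear action of a group with a central involution $\tau$ on $\bP^2$ forces a representation splitting $V = V_1\oplus V_2$ with $\tau$ acting by a scalar on exactly one summand; passing to a standard model (blowing up the isolated $G$-fixed point, as in \cite[Section 7.6]{HKT-small}) produces \emph{two} symbols of the shape $(C_2,\overline{G}\actsfromleft k(\bP^1),(1))$ --- one from the exceptional divisor, one from $\bP(V_2)$ --- whereas our conic bundle produces only one. Hence $[X\actsfromright G]\neq[\bP^2\actsfromright G]$ in $\Burn_2(G)$, and $X\notin\Lin$. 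I would also need to handle the possibility that the relevant $G$ has no generically free linear action on $\bP^2$ at all (e.g.\ because its smallest faithful representation is too large), in which case nonlinearizability is immediate.

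The main obstacle I anticipate is the bookkeeping in the classification step: extracting from \cite{pro-II} and \cite{DI} the precise list of groups $G$ and the precise fixed-locus data of the central involution in the $g=2$, $K_X^2=2$ case, and verifying that in \emph{every} such case the count of incompressible symbols on $X$ genuinely differs from that on $\bP^2$ (in particular that no other element of $G$ contributes a competing symbol that could match the linear side). A secondary subtlety is checking incompressibility of the symbol carefully --- i.e.\ that the residual action on the fixed curve is indeed generically free and the curve is rational, so that $(C_2,\overline{G}\actsfromleft k(\bP^1),(1))$ is not a boundary in the Burnside relations --- but this should follow from the exceptional conic bundle construction as in the $K_X^2=1$ case.
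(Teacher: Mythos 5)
Your opening move (dispose of the case where \textbf{(H1)} fails, then invoke the classification of exceptional conic bundles from \cite[\S 8]{pro-II}) matches the paper, but the structural output you then assume from that classification is not what it actually gives, and this creates a genuine gap. For $K_X^2=2$, $g=2$ the relevant facts are: the kernel $G_F=\ker\bigl(G\to\Aut(\Pic(X))\bigr)$ is a nontrivial \emph{cyclic} group $C_m$ (not necessarily of order $2$), and the induced group on the base is forced to be $G_B\simeq\fS_4$ (the table in \cite[\S 8.7]{pro-II} rules out the dihedral possibilities). There is no reason for your ``distinguished central involution $\tau$ with residual quotient $\overline G$ acting generically freely on a fixed $\bP^1$, contributing the symbol with multiplicity one'' picture to hold, and your whole comparison (one copy of $(C_2,\overline G\actsfromleft k(\bP^1),(1))$ on $X$ versus two copies on $\bP^2$, as in Lemma~\ref{lemm:conic1}) is built on that unestablished picture. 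In particular the stabilizer of the relevant fixed curve is $C_{2^r}$ (the $2$-part of $C_m$), not $C_2$, and the residual action on it is through a Klein four group, not through all of $G/\langle\tau\rangle$; so the symbols you would need to match up on the two sides are not even of the shape you posit.

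The paper's proof resolves exactly the ``bookkeeping'' you flag but defer: it restricts to a $2$-Sylow subgroup $G_2\subset G$, which sits in an extension $1\to C_{2^r}\to G_2\to\fD_4\to 1$, observes that any generically free $3$-dimensional representation of $G_2$ must split as $V_1\oplus V_2$ with $V_2$ faithful of dimension $2$, and then lists the only possible incompressible divisorial symbols of a standard linear model: $(C,G_2/C\actsfromleft k(\bP^1),(\chi))$ and $(C,G_2/C\actsfromleft k(\bP^1),(\bar\chi))$ with $C$ cyclic. The contradiction is not a multiplicity count of one identical symbol: it is that the sum of these two symbols can never equal the single symbol $(C_{2^r},\mathfrak{K}_4\actsfromleft k(\bP^1),(\psi))$ occurring in $[X\actsfromright G_2]$, because the stabilizer/residual-group/weight data are structurally incompatible. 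Without the Sylow reduction and without the correct description of $G_F$ and $G_B$, your proposal cannot verify either the symbol on the conic-bundle side or the list of symbols on the linear side, so as written the argument does not go through.
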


\begin{proof}
If $X$ fails {\bf (H1)} 
then $X$ is not linearizable. 
The other cases have been classified in \cite[\S 8]{pro-II}: consider the representation
\[
\varrho\colon G \to \mathrm{Aut}(\mathrm{Pic}(X)),
\]
its kernel $\mathrm{ker} (\varrho )$, and  
the exact sequence
\begin{equation}
1 \to G_F \to G \to G_B \to 1,
\end{equation}
where $G_F\subset G$ is the largest subgroup acting trivially on the base $B=\bP^1$.
By \cite[Theorem 8.3]{pro-II}, we have $\mathrm{ker} (\varrho )\neq \{1\}$; by \cite[Theorem 8.6(2)]{pro-II} it is cyclic, whereas $G_B \simeq \mathfrak{D}_n$, with $n\ge 3$, or $ G_B\simeq \mathfrak{S}_4$. The table in \cite[Section 8.7]{pro-II} shows that $G_B=\fS_4$,  and  
\cite[Thm. 8.6]{pro-II} shows that $G_F =\mathrm{ker}(\varrho )=C_m$, a nontrivial cyclic group of order $m$.

Write $C_m =C_{2^r}\times C_{m'}$ with $\gcd(m',2)=1$, and consider a $2$-Sylow subgroup $G_2$ of $G$ that contains $C_{2^r}$. Then $G_2$ has order $2^r\times 8$ and sits in an extension
\[
1 \to C_{2^r} \to G_2 \to \bar{G}_2 \to 1
\]
where $\bar{G}_2$ is a subgroup of $\mathfrak{S}_4$ of order $8$, hence equal to $\mathfrak{D}_4$. Since the order of a group is divisible by the degree of any of its irreducible representations, every $3$-dimensional representations $V$ of $G_2$ has to decompose into irreducible summands of degrees $1,1,1$ or $1, 2$. Only the latter can be generically free. 
Thus, we may assume that $V$ is of the form 
\[
V=V_1\oplus V_2
\]
with $V_i$ irreducible of dimension $i$. Here $V_1=k_{\chi}$ is a representation of $G_2$ by some character $\chi$,  and we can assume that $V_1$ is trivial, and $V_2$ is a faithful $G_2$-representation. A standard model for $G_2$-action is the blowup 
$\widetilde{\bP(V)}\to \bP(V)$ of the $G_2$-fixed point 
$p_0=[1:0:0]$, see \cite[Section 7.2]{HKT-small}. The only incompressible divisorial symbols 
might arise from the exceptional divisor, respectively, the preimage of the projectivization $\bP^1=\bP(V_2)\subset \bP(V)$. The corresponding symbols are
$$
(C, G_2/C\actsfromleft k(\bP^1), (\chi)), \quad  (C, G_2/C\actsfromleft k(\bP^1), 
(\bar{\chi})), 
$$
where $C\subset G_2$ is a cyclic group and $\chi$ is a primitive character of $C$. 
Their sum in $\Burn_2(G_2)$ cannot equal to 
\[
(C_{2^r}, \mathfrak{K}_4 \actsfromleft k(\bP^1), (\psi)) 
\]
with $\psi$ some primitive character of $C_{2^r}$, for any choices of $C,\chi$. Thus  
$$
[X\actsfromright G_2 ] \neq [\bP^2\actsfromright G_2]
$$
in $\Burn_2(G_2)$, for any generically free linear action of $G_2$ on $\bP^2$.
\end{proof}

\subsection*{Linearization and derived categories}

We consider rational $G$-surfaces and investigate which pieces and properties of the $G$-category $\rD^b(X)$ are sensitive to geometric, and in particular, $G$-birational, characteristics of the $G$-action on $X$.


\begin{lemm}
\label{lem:InvSequenceH1}
Let $X$ be a rational $G$-surface $X$ admitting a full $G$-invariant exceptional sequence. Then the $G$-action on $\Pic(X)$ satisfies {\bf (H1)} and {\bf (SP)}. 
\end{lemm}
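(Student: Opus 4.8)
The plan is to reduce the statement to a fact about $\mathrm{K}_0(X)$ as a $\bZ G$-module. First I would record that the classes $[E_1],\dots,[E_n]$ of a full exceptional sequence form a $\bZ$-basis of $\mathrm{K}_0(X)$: the Gram matrix of the Euler form in this basis is unipotent upper-triangular (ones on the diagonal by exceptionality, zeros below it by semiorthogonality), hence invertible over $\bZ$, so the $[E_r]$ are independent and, by fullness, generate. Since ${\bf E}$ is $G$-invariant, each $g\in G$ satisfies $g^*E_r\simeq E_{\sigma_g(r)}$ for a permutation $\sigma_g$ of $\{1,\dots,n\}$, and because $g^*$ is exact this says exactly that $G$ permutes the basis $\{[E_r]\}$. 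Hence $\mathrm{K}_0(X)$ is a permutation $\bZ G$-module.

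Next I would peel $\Pic(X)$ off this permutation module using $G$-equivariant retractions built from Chern classes. The maps $\rk\colon\mathrm{K}_0(X)\to\bZ$, $c_1\colon\mathrm{K}_0(X)\to\Pic(X)$ (well defined as in the proof of Proposition~\ref{pro:LinearizedExceptional}), and $\chi_X:=\chi(\cO_X,-)\colon\mathrm{K}_0(X)\to\bZ$ are all $G$-equivariant --- for $\chi_X$ because $g^*$ is an equivalence fixing $\cO_X$, and the two copies of $\bZ$ carry the trivial action --- so they assemble into a $G$-equivariant map $\Psi=(\rk,c_1,\chi_X)\colon\mathrm{K}_0(X)\to\bZ\oplus\Pic(X)\oplus\bZ$. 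I would then check $\Psi$ is an isomorphism: the kernel of $(\rk,c_1)$ is the codimension-$\ge 2$ part $F^2\mathrm{K}_0(X)$, which is generated by structure sheaves of closed points, and since $X$ is rational one has $\mathrm{CH}_0(X)\cong\bZ$, forcing $F^2\mathrm{K}_0(X)=\bZ\cdot[\cO_p]$ with trivial $G$-action (surjectivity of $\mathrm{CH}_0(X)\to F^2\mathrm{K}_0(X)$ is the standard filtration argument; injectivity uses that a degree-zero $0$-cycle on a rational surface is rationally trivial together with $\chi_X([\cO_p])=1$). As $\chi_X$ is an isomorphism on $F^2\mathrm{K}_0(X)$, $\Psi$ is injective; surjectivity is immediate from $\Psi([\cO_X])=(1,0,1)$, $\Psi([\cO_X(D)])=(1,[D],\chi(\cO_X(D)))$ and $\Psi([\cO_p])=(0,0,1)$. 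This gives a $\bZ G$-module isomorphism $\mathrm{K}_0(X)\cong\Pic(X)\oplus\bZ^2$.

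Finally I would conclude. The displayed isomorphism, with $\bZ^2$ and $\mathrm{K}_0(X)$ both permutation modules, exhibits $\Pic(X)$ as a stable permutation module, which is {\bf (SP)}. For {\bf (H1)} I would invoke that $\rH^1(H,P)=0$ for every subgroup $H\subseteq G$ and every permutation module $P$ (by Shapiro's lemma this reduces to $\rH^1(L,\bZ)=\Hom(L,\bZ)=0$ for finite $L$), a vanishing inherited by direct summands; applied to $\mathrm{K}_0(X)$ it yields $\rH^1(H,\Pic(X))=0$. Dualizing the isomorphism of the previous paragraph and using that the dual of a permutation module is again a permutation module, $\Pic(X)^\vee$ is likewise stably permutation, so $\rH^1(H,\Pic(X)^\vee)=0$ as well.

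The step I expect to be the actual content is the integral identification $F^2\mathrm{K}_0(X)\cong\bZ$ with trivial $G$-action: this is where rationality of $X$ (hence $\mathrm{CH}_0(X)\cong\bZ$) is precisely what makes the Chern-class invariants split $\mathrm{K}_0(X)$ integrally and equivariantly, rather than only rationally. Once that is in place, everything else is bookkeeping with group cohomology of permutation modules.
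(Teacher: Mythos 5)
Your proof is correct and follows essentially the same route as the paper's: $G$-invariance makes the classes of the exceptional objects a permutation $\bZ$-basis of $\rK_0(X)$, and the $G$-equivariant splitting $\rK_0(X)\simeq \bZ\oplus\Pic(X)\oplus\bZ$ with trivial action on the two copies of $\bZ$ then yields {\bf (SP)} and {\bf (H1)}. The only difference is that you spell out the construction of that splitting via $(\rk, c_1, \chi(\cO_X,-))$ and the identification of $F^2\rK_0(X)$ with $\bZ$ (trivial action) using $\mathrm{CH}_0(X)\simeq\bZ$, a step the paper simply quotes as a known isomorphism of $G$-modules.
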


\begin{proof}
    The classes of the terms of the sequence in the Grothendieck $\rK$-group $\rK_0 (X)$ form a $\bZ$-basis that is permuted by $G$. Thus $\rK_0 (X)$ is a permutation module, and since
\[
\rK_0 (X) \simeq \bZ \oplus \mathrm{Pic}(X) \oplus \bZ
\]
as $G$-modules, with trivial $G$-action on the two summands $\bZ$, we obtain the claim. 
\end{proof}

Incidentally, assuming $X$ is a minimal $G$-Del Pezzo surface, Theorem 1.2 of \cite{pro-II} shows that {\bf (H1)} is equivalent to the fact that $G$ does not fix a curve of positive genus, and also equivalent to the condition $K_X^2\ge 5$ or $X$ being a special quartic Del Pezzo surface with a very special action, described in \cite[Thm. 1.2, (iii), (b)]{pro-II}.

\begin{lemm}\label{lem:NoEquivariant}
Let $X=\bP^2$ with a projectively linear but nonlinear action of a finite group $G$. Then $X$ does admit a full $G$-invariant exceptional sequence, but no full $G$-equivariant exceptional sequence.
\end{lemm}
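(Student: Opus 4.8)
The statement has two halves. First, that $X=\bP^2$ with a projectively linear but nonlinear $G$-action admits a full $G$-invariant exceptional sequence: for this I would use Beilinson's sequence ${\bf E}=(\cO,\cO(1),\cO(2))$ on $\bP^2$. Since $\Pic(\bP^2)=\bZ\cdot\cO(1)$ and $G\subseteq\PGL_3$ acts on $\bP^2$ preserving the ample generator $\cO(1)$ up to isomorphism (indeed $g^*\cO(a)\simeq\cO(a)$ for all $g$, as $\Pic(\bP^2)^G=\Pic(\bP^2)$), every line bundle is $G$-invariant; hence each $E_r$ is fixed by all of $G$, and the sequence is trivially $G$-invariant. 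This is the easy half.

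\textbf{The nonexistence of a $G$-equivariant full exceptional sequence.} This is where the hypothesis that the action is \emph{not} linear is used, and it should come down to the Amitsur invariant. Suppose for contradiction that ${\bf E}'=(E_1',\dots,E_n')$ is a full $G$-equivariant exceptional sequence. Since $\bP^2$ is a Del Pezzo surface, $n=3$ and $\rK_0(\bP^2)\simeq\bZ^3$, and the classes $[E_i']$ form a $\bZ$-basis permuted by $G$ up to the reordering in Definition~\ref{defi:EquivExcColl}(1); but as above $G$ fixes $\Pic(\bP^2)$, and one checks that $G$ in fact fixes each class $[E_i']\in\rK_0(\bP^2)$ (the rank is a $G$-invariant, $c_1$ lands in $\Pic(\bP^2)^G$, and the remaining numerical datum — the ``$\mathrm{ch}_2$''/Euler characteristic coordinate — is likewise pinned down once rank and $c_1$ are fixed, because $G$ permutes an exceptional sequence and exceptionality rigidifies the possible classes). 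Hence each $G_r=G$ in Definition~\ref{defi:EquivExcColl}(2), so a $G$-equivariant sequence on $\bP^2$ is automatically $G$-\emph{linearized}. By Proposition~\ref{pro:LinearizedExceptional}, $\mathrm{Pic}(\bP^2,G)\twoheadrightarrow\mathrm{Pic}(\bP^2)^G$, hence $\mathrm{Am}(\bP^2,G)=0$. But a projectively linear $G$-action on $\bP^2$ is linearizable (lifts to a linear action on $k^3$) precisely when the Amitsur class vanishes, i.e.\ when the extension $1\to k^\times\to\GL_3\to\PGL_3\to1$ splits over $G\subseteq\PGL_3$; the obstruction to this splitting is exactly the image of $\cO(1)\in\Pic(\bP^2)^G$ under $\delta_2$, which is $\mathrm{Am}(\bP^2,G)$. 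So $\mathrm{Am}(\bP^2,G)=0$ forces the action to be linear, contradicting the hypothesis. Therefore no full $G$-equivariant exceptional sequence exists.

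\textbf{Main obstacle.} The delicate point is the claim that on $\bP^2$ a $G$-equivariant exceptional sequence is automatically $G$-linearized, i.e.\ that each stabilizer $G_r$ equals $G$. One must rule out a genuine permutation action of $G$ on $\{E_1',E_2',E_3'\}$ with nontrivial orbits. The cleanest route is numerical: $G$ acts trivially on $\rK_0(\bP^2)$ because it acts trivially on $H^*(\bP^2,\bZ)$ (the action being connected-to-identity in the sense that $\PGL_3$ is connected, so the induced action on cohomology is trivial), and the class of an exceptional object in an exceptional sequence on $\bP^2$ is determined up to finitely many possibilities by its position; combined with the observation that isomorphism classes of exceptional objects with a fixed $\rK_0$-class on $\bP^2$ are rigid, one concludes $g^*E_r'\simeq E_r'$ for all $g$. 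I would state this as a short lemma (perhaps citing rigidity of exceptional objects on surfaces, or the classification of exceptional bundles on $\bP^2$ via Drézet–Le~Potier) and then invoke Proposition~\ref{pro:LinearizedExceptional} and the Amitsur obstruction as above. Alternatively, and more robustly, one can bypass the linearization step entirely: a $G$-equivariant (even just $G$-invariant) full exceptional sequence of \emph{line bundles} would, by Lemma~\ref{lemm:InvEquiv} read backwards together with Proposition~\ref{pro:LinearizedExceptional}, force $\mathrm{Am}(\bP^2,G)=0$; and any full exceptional sequence on $\bP^2$ whose terms are all line bundles is, up to the standard mutations, $\mathrm{(}\cO(a),\cO(a+1),\cO(a+2)\mathrm{)}$ — so if the equivariant sequence involves the exceptional bundle $T_{\bP^2}(-1)$ or its twists, one argues directly that $c_1$ of that term still lies in $\Pic(\bP^2)^G$ and carries the same Amitsur obstruction. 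Either way the conclusion is that equivariance of a full exceptional sequence on $\bP^2$ forces $\mathrm{Am}(\bP^2,G)=0$, i.e.\ forces the action to be linear.
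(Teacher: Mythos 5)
Your proposal is correct and takes essentially the same route as the paper: the easy half via the invariance of $(\cO,\cO(1),\cO(2))$ is identical, and for the hard half both arguments use triviality of the $G$-action on $\rK_0(\bP^2)$ to force every term of a putative $G$-equivariant sequence to have $G$-fixed isomorphism class, hence to be $G$-linearized, and then use surjectivity of $c_1$ onto $\Pic(\bP^2)$ to linearize $\cO(1)$ and contradict nonlinearity. The only difference is packaging: you invoke Proposition~\ref{pro:LinearizedExceptional} together with the Amitsur obstruction, while the paper first reduces to vector bundles via Kuleshov--Orlov and runs the determinant argument directly, so your route sidesteps that citation.
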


\begin{proof}
The exceptional sequence $(\cO, \cO(1), \cO(2))$ is $G$-invariant. 
From \cite{Ku-Or94} it is known that every exceptional object in $\rD^b (X)$ is, up to shift, a vector bundle. Thus assume that $(\cE_1, \cE_2, \cE_3)$ is a $G$-equivariant full exceptional sequence consisting of vector bundles. Since the map
\[
\rD^b (X) \to \rK_0 (X)
\]
is $G$-equivariant and the action on $\rK_0(X)$ is trivial in this case, we see that every element in $G$ fixes the isomorphism class of each $\cE_i$ (because the images of $\cE_1, \cE_2, \cE_3$ form a $\bZ$-basis of $\rK_0 (X)$). If we compose $\rD^b (X) \to \rK_0 (X)$ with the first Chern class map, we get a surjective map to $\mathrm{Pic}(X)$. In other words, the top exterior powers of the $\cE_i$ generate the Picard group, hence at least one of them has to be isomorphic to $\cO_{\bP^2}(r)$ for some odd integer $r$. Thus $\cO_{\bP^2}(1)$ is also $G$-linearized, contradicting our assumption that the action is nonlinearizable.
\end{proof}

\subsection*{Examples with $\Aut(X)$-equivariant exceptional sequences}
We present examples of rational surfaces $X$
such that $\rD^b(X)$ admits a full $\mathrm{Aut}(X)$-equivariant exceptional sequence but
$X$ is not lineariable, 
for some $G\subseteq \Aut(X)$. 

\ 

\noindent
{\bf DP6:} 
Let $X$ be a Del Pezzo surface of degree $6$.
Then $X$ has full $\mathrm{Aut}(X)$-\emph{invariant}  exceptional sequence. 
Indeed, recall that there is an exact sequence
\[
0 \to T \to \mathrm{Aut}(X) \to W_X \to 0,
\]
where 
$$
W_X \simeq \bZ/2 \times \mathfrak{S}_3, \quad \mathrm{Aut}(X)  \simeq N(T) \rtimes \bZ /2,
$$ 
and $T$ is the maximal torus of $\PGL_3$, the quotient of $(k^\times)^3$ by the diagonal subgroup $k^\times$, $N(T)$ its normalizer. A generator of $\bZ/2$ in $W_X=\bZ/2 \times \mathfrak{S}_3$ can be identified with the lift of the standard Cremona involution on $\bP^2$ and $\mathfrak{S}_3$ is realized as the group of permutations of the points 
$$
p_1=(1:0:0),\quad p_2=(0:1:0), \quad p_3=(0:0:1)
$$ 
that are blown up to obtain $X$. 
There is always a full invariant exceptional collection for the entire automorphism group of $X$. Indeed, $X$ has the following (three block) exceptional sequence:
\begin{gather*}
\cO_X, \quad \cO_X (H), \quad \cO_X(2H- E_1-E_2-E_3), \\
\quad \cO_X (2H- E_1-E_2), \quad \cO_X (2H- E_2-E_3),\quad  \cO_X (2H- E_1-E_3),
\end{gather*}
where $H$ is the pullback of a hyperplane class and $E_i$ the exceptional divisors. The Cremona involution $\sigma$ acts as
\[
H \mapsto 2H -E_1-E_2-E_3, \quad E_i \mapsto H -E_j-E_k, \quad \{i,j,k\}=\{1,2,3\},
\]
whereas $\mathfrak{S}_3$ permutes the $E_i$ and fixes $H$, and $T$ fixes $H, E_i$. However, this sequence is not always an  equivariant exceptional sequence (for example, the normalizer of a maximal torus in $\PGL_3$ is in the stabilizer of $\cO_X (H)$, but the line bundle is not linearized since the action does not lift to a linear action on $k^3$).

\

\noindent
{\bf DP5:} Let $X$ be a Del Pezzo surface of degree $5$. We have 
$$
\Aut(X)\simeq \fS_5.
$$
By \cite[Theorem 1.2 and Example 1.3]{CaTe20}, $X$ has a full $\mathfrak{S}_5$-equivariant exceptional collection. However, $X$ is $G$-superrigid for $G=\mathfrak{A}_5$ \cite{DI}. 

\

\subsection*{A special DP4}

By \cite[Theorem 1.2]{pro-II}, there is a unique minimal $G$-Del Pezzo surface $X$ of degree $\le 4$ that satisfies {\bf (H1)}. 
It is a Del Pezzo surface of degree 4, an interection of two quadrics in $\bP^4$
\begin{equation}
\label{eqn:X}
x_1^2+ \zeta x_2^2 + \zeta^2 x_3^2 + x_4^2 = x_1^2+ \zeta^2 x_2^2+ \zeta x_3^2 +x_5^2 =0,
\end{equation}
with $\zeta=\zeta_3$ a primitive cube root of unity, and $G=\bZ/3\rtimes \bZ/4$, 
with generators
\begin{align*}
\gamma\colon (x_1, x_2, x_3, x_4, x_5) &\mapsto (x_2, x_3, x_1,\zeta x_4, \zeta^2 x_5), \\
\beta'\colon (x_1, x_2, x_3, x_4, x_5)  &\mapsto (x_1, x_3, x_2, -x_5, x_4).
\end{align*}

\begin{theo}
\label{theo:Degree4Prokh}
The derived category $\rD^b(X)$ of the minimal $G$-Del Pezzo surface $X$ given by \eqref{eqn:X}
does not admit a full $G$-invariant exceptional sequence. 
\end{theo}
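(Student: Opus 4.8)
The plan is to argue by contradiction using the invariants introduced above, specifically the interplay between a full $G$-invariant exceptional sequence, the $G$-module structure of $\mathrm{K}_0(X)$, and the Amitsur invariant. Suppose $\rD^b(X)$ has a full $G$-invariant exceptional sequence $(E_1, \dots, E_7)$ (since $K_X^2=4$, the rank of $\mathrm{K}_0(X)$ is $8$, not $7$; I will use the correct length $n = 2 + \rk\Pic(X) = 2 + 6 = 8$). By Lemma~\ref{lem:InvSequenceH1}, the $G$-action on $\Pic(X)$ then satisfies \textbf{(H1)} and \textbf{(SP)}. The first observation is that this is consistent — indeed $X$ was singled out precisely because it is the unique minimal $G$-Del Pezzo surface of degree $\le 4$ satisfying \textbf{(H1)} — so the obstruction cannot come from \textbf{(H1)} or \textbf{(SP)} alone. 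Instead I would extract \emph{finer} information from the existence of the sequence.

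The key step is to combine the $G$-invariant exceptional sequence with the first Chern class map, as in the proof of Proposition~\ref{pro:LinearizedExceptional} and Lemma~\ref{lem:NoEquivariant}. First I would analyze the $G$-action on the set $\{[E_1], \dots, [E_8]\}$ of isomorphism classes: $G$ permutes these, so it permutes the $c_1(E_i) \in \Pic(X)$, and the $\bZ$-span of the $c_1(E_i)$ is all of $\Pic(X)$. Using the classification of exceptional objects on Del Pezzo surfaces of degree $4$ (they are, up to shift, rigid vector bundles, and their Chern characters are constrained — each $E_i$ contributes a class in $\mathrm{K}_0(X)$ with $\chi(E_i, E_i) = 1$), one gets strong restrictions on which line bundle classes can occur as $c_1(E_i)$. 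I would then confront this with the explicit $G = \bZ/3 \rtimes \bZ/4$ action on $\Pic(X) \cong \mathrm{Pic}(X_{\bar k})$ and its $(-1)$-curve configuration, using the description of $\Aut(X)$ and the action on the $16$ lines coming from \eqref{eqn:X}. The point I expect to exploit is that for this very special action, the permutation representation of $G$ on any $G$-stable $\bZ$-basis of $\mathrm{K}_0(X)$ arising from exceptional objects is forced to have a sub/quotient that makes the Amitsur class $\mathrm{Am}(X, H)$ nonzero for some cyclic $H \subseteq G$ — e.g., the stabilizer $G_r$ of some $E_r$ must fail to linearize the line bundle $c_1(E_r)^{\otimes(\text{odd})}$, exactly the mechanism of Lemma~\ref{lem:NoEquivariant}. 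Concretely: a $G$-invariant exceptional sequence of line bundles on a minimal (hence Picard rank $>1$ with no $G$-fixed line bundle other than multiples of $K_X$) surface would already force \textbf{(SP)} to hold with an explicit basis; but on a surface with $\rho = 6$ and this $G$ one can show no such basis of $\Pic(X)$ by line bundle classes of exceptional objects exists, since the intersection form pairing $\langle c_1(E_i), c_1(E_j)\rangle$ together with $\chi(E_i,E_j) = 1$ for $i<j$ (or its analogue) over-determines the configuration.

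More precisely, the route I would actually take: reduce to showing that \emph{if} a full $G$-invariant exceptional sequence exists, then after passing to a suitable subgroup (a $2$-Sylow, say $H = \bZ/4$, or the cyclic $\bZ/3$), the induced $H$-invariant sequence would make $X$ satisfy a condition it demonstrably violates — for instance that $\mathrm{Am}(X, H) = 0$, which I would check fails by exhibiting an $H$-invariant but non-$H$-linearizable line bundle on $X$ using the quadric equations \eqref{eqn:X} directly (the anticanonical map embeds $X$ in $\bP^4$, and the relevant line bundle class restricted from $\cO_{\bP^4}(1)$ twisted appropriately picks up a nontrivial class in $\rH^2(H, k^\times)$ because $H$ acts on the ambient $\bP^4$ projectively-linearly but the relevant sub-line-system does not lift). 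The main obstacle will be the bookkeeping of exceptional objects: on a Del Pezzo surface of degree $4$ the exceptional bundles are not all line bundles (unlike $\bP^2$), so I cannot directly invoke the clean Chern-class argument of Lemma~\ref{lem:NoEquivariant}; I will need the classification of exceptional bundles on $X$ (their ranks and determinants) to control $c_1$, and then show that \emph{no} $G$-invariant configuration of eight such Chern characters spans $\mathrm{K}_0(X)$ compatibly with both the Euler form and the prescribed $G$-action — this combinatorial/representation-theoretic incompatibility is the heart of the argument, and is where I expect the bulk of the work to lie, likely assisted by the explicit $G$-module structure of $\Pic(X)$ computed from \eqref{eqn:X}.
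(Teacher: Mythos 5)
Your central mechanism does not work. You propose to derive the contradiction by showing that the existence of the sequence would force $\mathrm{Am}(X,H)=0$ for a suitable subgroup $H$ (cyclic $\bZ/4$ or $\bZ/3$), and then to exhibit a nonzero Amitsur class. But this fails on two counts. First, a full $G$-\emph{invariant} exceptional sequence carries no linearization data on its terms, so it implies nothing about the Amitsur invariant; only a $G$-\emph{linearized} sequence does (Proposition~\ref{pro:LinearizedExceptional}), and the mechanism of Lemma~\ref{lem:NoEquivariant} likewise only obstructs \emph{equivariant} sequences. The theorem here is precisely about the weaker invariance notion, which sits strictly below equivariance in Figure 1, so no argument routed through linearizability of line bundles or $\mathrm{Am}(X,H)$ can close the contradiction. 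Second, even setting that aside, there is no nontrivial Amitsur class to exhibit: for $k$ algebraically closed and $H$ cyclic, $\rH^2(H,k^\times)=0$ (and the Schur multiplier of $G=\bZ/3\rtimes\bZ/4$ is also trivial), so $\mathrm{Am}(X,H)$ vanishes identically for every subgroup you name. Your first route, forcing some stabilizer $G_r$ to fail to linearize $c_1(E_r)$, collapses for the same two reasons.

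The actual obstruction in the paper is purely numerical and lives entirely in $\rK_0(X)$ with its Euler pairing, with no classification of exceptional bundles needed. One decomposes the eight terms into $G$-orbits and looks at the stabilizer $H$ of a class $v$: if $H=1$ the orbit has size $|G|=12>8$; if $H$ is one of the six nontrivial proper (cyclic) subgroups, the numerically exceptional conditions $\chi(v,g(v))\in\{0,1\}$ restricted to $\rK_0(X)^H$ (computed explicitly via the $16$ lines, a basis of $\Pic(X)$, the matrices of $\beta,\gamma$, and Riemann--Roch on the lattice $\Lambda$) have no integral solutions --- e.g.\ for $H=\langle\beta\rangle$ one is led to $1=3z_2^2+12z_2z_3+12z_3^2$, impossible mod $3$. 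Hence every class would have to be $G$-fixed, but eight $G$-fixed classes cannot form a $\bZ$-basis of $\rK_0(X)$ since $\rK_0(X)^G\neq\rK_0(X)$. Your closing sentence gestures toward exactly this kind of Euler-form incompatibility, but you never identify the orbit-stabilizer dichotomy or the mod-$3$ computation that makes it work, and the concrete tools you do commit to (Amitsur classes, non-linearizable line bundles, classification of exceptional bundles on degree-$4$ Del Pezzo surfaces) either cannot yield the contradiction or are unnecessary.
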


\begin{proof}
Arguing by contradiction, we assume that such a sequence 
\[
(\mathcal{E}_1, \dots , \mathcal{E}_8)
\]
exists. The group $G$ acts on the terms of the sequence by permutations, decomposing 
the set of terms into $G$-orbits, each of which is again an exceptional sequence. 
Let 
\[
(\mathcal{F}_1, \dots , \mathcal{F}_r)
\]
be one of the orbits. Consider the classes $v_i$ of the $\mathcal{F}_i$ in the Grothendieck group
\[
\rK_0(X)\simeq \bZ \oplus \mathrm{Pic}(X) \oplus \bZ \simeq \bZ^8. 
\]
Let $\chi (-, -)$ be the Euler bilinear pairing on $\rK_0(X)$ and $v:=v_r= [\mathcal{F}_r]$. Since $(v_1, \dots , v_r)$ is a numerically exceptional sequence with respect to the Euler pairing, we have
\[
 (\ast)\quad\quad \chi \bigl(v, g(v)\bigr)= \left\{ 
 \begin{matrix}
     0 & \text{if $g(v)\neq v$,}\\
     1 & \text{if $g(v)=v$,}
 \end{matrix}\right. 
\]
for all $g\in G$. 
These are quadratic equations for the coefficients of $v$. 
Let $H\subseteq G$ be the subgroup fixing $v$. If $H=G$, then $r=1$ and $v_r=v_1$ is $G$-invariant. If $H=1$, then $r=12$, a contradiction. Let us now assume that $H$ is a nontrivial proper subgroup of $G$. There are six such subgroups and they are all cyclic. Let $\rK_0(X)^H\simeq \bZ^r$ be the space of $H$-invariants and consider the ideal $I_H$ generated by the conditions $(\ast)$ in $\bZ[s_1, \dots , s_r]$. One can show 
that $I_H= (1)$ mod $3$ for all such $H$, e.g., with {\ttfamily Macaulay2}\footnote{{\tt \tiny warwick.ac.uk/fac/sci/maths/people/staff/boehning/m2filesequivariantderived}}. 
Hence only $H=G$ is possible. Since this is the case for all $G$-orbits, we obtain that all classes $[E_i]\in \rK_0(X)$ are $G$-invariant. However, they also form a $\bZ$-basis of $\rK_0(X)$. But $\rK_0(X)^G \neq \rK_0(X)$.

More precisely,  we proceed as follows. Since $X$ is a DP4, there are $16$ lines on $X$. To determine these lines explicitly consider the rank $2$ skew matrix
\[
L = {\tiny \begin{pmatrix}
     0&    1&    -1&      1&      1 \\
    -1&    0&     1&   \zeta^2&     \zeta \\
     1&   -1&     0&     \zeta&   \zeta^2 \\
    -1&-\zeta^2&   -\zeta&      0&\zeta-\zeta^2 \\
    -1&  -\zeta&-\zeta^2&-\zeta+\zeta^2&      0
\end{pmatrix}}\in \mathrm{Gr}(2, W) \subset \bP (\wedge^2(W)), 
\]
with $\dim(W)=5,$
and a diagonal matrix $D$ with entries $\pm1$ on the diagonal. Then  we check that $DLD^t$ represents a line on $X$. This gives the $16$ lines on $X$ which are permuted by $\beta$ and $\gamma$. Observe that the line represented by $L$ is $\gamma$-invariant. 

We now choose $6$ lines whose classes are a basis of $\Pic(X)$ as follows. There are precisely $5$ lines $L_1, \dots , L_5$ that intersect the line represented by $L$. Two of these lines are $\gamma$-invariant. Without loss of generality, we can assume these are $L_1$ and $L_2$. Finally there is a unique fourth $\gamma$-invariant line $L_6$. 

Then $L_1, \dots , L_6$ form a basis of $\Pic(X)$: indeed their intersection matrix can be computed as
\[
\left(
\begin{smallmatrix}
      {-1}&0&0&0&0&1\\
      0&{-1}&0&0&0&1\\
      0&0&{-1}&0&0&0\\
      0&0&0&{-1}&0&0\\
      0&0&0&0&{-1}&0\\
      1&1&0&0&0&{-1}\end{smallmatrix}
\right)
\]
We can now compute the representation of the other lines in this basis  by considering their intersections with the lines in the given basis. We find representations
{\tiny
\[
    B = \left(\begin{matrix}
      1&1&{-1}&{-1}&{-1}&2\\
      0&0&0&0&0&1\\
      1&0&0&{-1}&0&1\\
      1&0&{-1}&0&0&1\\
      1&0&0&0&{-1}&1\\
      1&0&0&0&0&0\end{matrix}\right)
      \quad \text{and} \quad
    C = \left(\begin{matrix}
      1&0&0&0&0&0\\
      0&1&0&0&0&0\\
      0&0&0&0&1&0\\
      0&0&1&0&0&0\\
      0&0&0&1&0&0\\
      0&0&0&0&0&1\end{matrix}\right)
\]
}

\noindent
of $\beta$ and $\gamma$, respectively. 
Moreover, the canonical class $K_X$ is determined by the fact that the intersection number of $-K_X$ with all lines is equal to $1$. One finds:
\[
-K_X= 2L_1 +2L_2-L_3-L_4 -L_5 +3L_6,
\]
which is also equal to the sum of the four $\gamma$-invariant lines $L, L_1, L_2, L_6$.

\medskip

Following \cite[Section 3]{BB}, we work out the Euler pairing explicitly. The Chern character 
\begin{align*}
    \mathrm{ch}\colon \mathrm{K}_0(X) & \to \mathrm{CH}^* (X)_{\bQ} \\
    [\cE] & \mapsto \mathrm{rk}(E) + c_1 (\cE ) + \frac{c_1(\cE)^2 -2c_2 (\cE)}{2} 
\end{align*}
is an injective ring homomorphism with values in the sublattice 
\[
\Lambda:= \left\{ x   + y_1  l_1 + y_2 l_2 + \dots + y_6 l_6 + \frac{1}{2} z p \right\}\simeq \bZ^8 \subset \mathrm{CH}^* (X)_{\bQ},
\]
where $(x, y_1, y_2, \dots , y_6, z)\in \bZ^{8}$, $p$ is the class of a point, and $l_i:=c_1(\cO(L_i))$. We set $v= (x,y,z)$, where
\[
y = y_1  l_1 + y_2 l_2 + \dots + y_6 l_6.
\]
Thus $\Lambda$ is generated by $\mathrm{CH}^0(X)\simeq \bZ$, $\mathrm{CH}^1(X)\simeq \mathrm{Pic}(X) \simeq \bZ^6$ and $\frac{1}{2}\mathrm{CH}^2(X)$, where $\mathrm{CH}^2(X)\simeq \bZ$ is generated by the Chern character of the skyscraper sheaf of a point $p$, which is just the class of $p$ in the Chow ring. Its image is an index $2$ sublattice $\mathrm{ch}(\mathrm{K}_0(X)) \subset \Lambda$: indeed, $\frac{1}{2}p$ is not in $\mathrm{ch}(\mathrm{K}_0(X))$ since for the Euler pairing $\chi$ 
\[
\chi (\cO_X, \cO_p) =1
\]
and $\chi$ takes integral values on $\mathrm{ch}(\mathrm{K}_0(X))$. The class of $\frac{1}{2}p$ generates the quotient $\Lambda/ \mathrm{ch}(\mathrm{K}_0(X))$. 
By Riemann-Roch,
\[
\chi (X, \mathcal{E}) = \deg \left( \mathrm{ch}(\mathcal{E}).\mathrm{td}(\mathcal{T}_X)\right)_2,
\]
where
\[
\mathrm{td}(\mathcal{T}_X) = 1 - \frac{1}{2}K_X + \frac{1}{12} (K_X^2 + c_2 ) = 1 - \frac{1}{2} K_X + p.
\]
The subscript $2$ in the second to last formula means that one only considers the top-dimensional component. Hence in terms of $v= (x,y,z)$,
\[
\chi (X, \mathcal{E}) = x - \frac{1}{2} y . K_X + \frac{1}{2} z \, . 
\]
If $\mathcal{E}_1$ and $\mathcal{E}_2$ are bundles, then 
\[
\chi (\mathcal{E}_1, \mathcal{E}_2 ) = \chi (X, \mathcal{E}_1^{\vee} \otimes \mathcal{E}_2 )
\]
and 
\begin{align*}
\mathrm{ch}(\mathcal{E}_1^{\vee} \otimes \mathcal{E}_2 ) &= \mathrm{ch}(\mathcal{E}_1^{\vee}). \mathrm{ch} (\mathcal{E}_2) = (x_1 - y_1 + \frac{1}{2} z_1 )( x_2 + y_2 + \frac{1}{2} z_2) \\
&= x_1x_2  + (x_1y_2 - x_2 y_1) + \frac{1}{2}(x_1z_2 + x_2z_1 - 2y_1y_2),
\end{align*}
whence
\[
\chi (\mathcal{E}_1, \mathcal{E}_2 ) = x_1x_2 - \frac{1}{2} (x_1y_2 - x_2 y_1).K_X + \frac{1}{2} (x_1z_2 + x_2z_1-2y_1y_2) \, .
\]

We work out the Euler pairing $\chi$ on the lattice $\Lambda$ in the above $\bZ$-basis: 
\[
\left(
\begin{smallmatrix}
      1&\frac{1}{2}&\frac{1}{2}&\frac{1}{2}&\frac{1}{2}&\frac{1}{2}&\frac{1}{2}&\frac{1}{2}\\
      {-\frac{1}{2}}&1&0&0&0&0&{-1}&0\\
      {-\frac{1}{2}}&0&1&0&0&0&{-1}&0\\
      {-\frac{1}{2}}&0&0&1&0&0&0&0\\
      {-\frac{1}{2}}&0&0&0&1&0&0&0\\
      {-\frac{1}{2}}&0&0&0&0&1&0&0\\
      {-\frac{1}{2}}&{-1}&{-1}&0&0&0&1&0\\
      \frac{1}{2}&0&0&0&0&0&0&0\end{smallmatrix}
      \right)
\]

We now show that equations $(\ast)$ cannot be solved even in the larger lattice $\Lambda$. Namely, consider the subgroup $H\subset G$ generated by $\beta$. The invariants of 
$\beta$ in $\Lambda$ are
\[
     v = ({z}_{1},{-2\,{z}_{3}},{-2\,{z}_{3}},-{z}_{2}-{z}_{3
       },{z}_{2}+3\,{z}_{3},{z}_{3},{-3\,{z}_{3}},{z}_{4})
\]
for $z_i \in \bZ$. Now
\begin{align*}
    1 & = \chi\bigl(v,v\bigr) = {z}_{1}^{2}+2\,{z}_{2}^{2}+8\,{z}_{2}{z}_{3}+4\,{z
       }_{3}^{2}+{z}_{1}{z}_{4}, \\
    0 & = \chi\bigl(v,vC\bigr) = {z}_{1}^{2}-{z}_{2}^{2}-4\,{z}_{2}{z}_{3}-8\,{z}_{3
       }^{2}+{z}_{1}{z}_{4}.
\end{align*}
Subtracting the second equation from the first we obtain
\[
    1 =  3\,{z}_{2}^{2}+12\,{z}_{2}{z}_{3}+12\,{z}_{3}^{2}
\]
which has no solution modulo $3$.

The same computation can be done for all nontrivial proper subgroups of $G$.
\end{proof}

\subsection*{Implications}

Figure 1 shows relations between the  different notions for a minimal $G$-Del Pezzo surface $X$ with $\mathrm{rk}\, \mathrm{Pic}(X)^G =1$. 

\begin{figure}[H]\label{f:Implications}
\xycenter{
*+[F]\txt{$X$ has a full\\ exceptional sequence \\ of $G$-linearized objects}\ar@{=>}[d]^-{\circled{4}} \ar@{<=>}[r]^-{\circled{5}} &
*+[F]\txt{$X=\bP^2\in \Lin$} \\
*+[F]\txt{$X\in \Lin$}\ar@{=>}[d]^{\circled{1}} &  \\
*+[F]\txt{$X$ has a full equivariant exceptional sequence}\ar@{=>}[d]^{\circled{2}} & \\
*+[F]\txt{$X$ has a full invariant exceptional sequence}\ar@{=>}[d]^{\circled{3}} & \\
*+[F]\txt{$X$ satisfies {\bf (H1)}}  & 
}
\caption{}
\end{figure}

\begin{itemize}
\item
The implications \circled{1}-\circled{4} are strict, see below for references to proofs. 
\item 
\circled{3} is proven in Lemma \ref{lem:InvSequenceH1}, whereas \circled{2}, \circled{4} are immediate from the definitions once \circled{5} is proven. 
\item 
\circled{1} is not reversible, e.g., for $X$ a DP6. 
\item 
\circled{5} follows from  Proposition~\ref{pro:LinearizedExceptional} since $X$ then has Picard rank $1$, and this also shows that \circled{4} is not reversible. 
\item \circled{2} is not reversible, by Lemma~\ref{lem:NoEquivariant}. 
\item \circled{3} is not reversible, by Theorem~\ref{theo:Degree4Prokh}.  
\end{itemize}

The main result, which requires a longer argument, is the implication \circled{1}. We will prove this more generally whenever $X$ is a smooth rational $G$-surface. 

\begin{theo}
\label{theo:DelPezzo}
A smooth projective rational $G$-surface that is linearizable has a full $G$-equivariant exceptional sequence. 
\end{theo}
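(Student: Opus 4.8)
The plan is to reduce the statement to a finite list of models via the equivariant minimal model program, and then exhibit a full $G$-equivariant exceptional sequence on each. First I would invoke equivariant resolution and weak factorization: since $X$ is linearizable, $X \sim_G \bP^2$ (with a linear action), so $X$ is obtained from $\bP(V)$, for $V$ a three-dimensional faithful $G$-representation, by a chain of $G$-equivariant blowups and blowdowns in smooth $G$-stable centers. An equivariant exceptional sequence is not a birational invariant on the nose, so the key point is to track how a full $G$-equivariant exceptional sequence behaves under a blowup at a $G$-stable smooth center (a point or a $G$-orbit of points, since we are on a surface). Here I would use Orlov's blowup formula in its equivariant refinement: if $\tilde X \to X$ is the blowup of a $G$-stable reduced $0$-dimensional subscheme $Z$, then $\rD^b(\tilde X)$ has a $G$-invariant semiorthogonal decomposition $\langle \rD^b(Z), \pi^* \rD^b(X) \rangle$, and $\rD^b(Z)$ is $G$-equivariantly the derived category of the $G$-set $Z$. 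The point is that linearizability of $X$ forces the Amitsur obstruction to vanish for $X$ and all its subgroups (Lemma~\ref{lemm:InvEquiv}, Proposition~\ref{pro:LinearizedExceptional}), so the relevant line bundles — in particular $\cO(-E_i)$ on the exceptional locus — are $G_i$-linearized where $G_i$ is their stabilizer, and the blocks $\rD^b(Z)$ can be arranged into a $G$-equivariant collection.

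The issue is that an arbitrary chain of blowups and blowdowns passes through intermediate surfaces that need not be linearizable, so one cannot inductively "stay linearizable" along the weak factorization. Instead I would take the more structural route: since $X$ is linearizable and rational, run the $G$-MMP on $X$ itself; it terminates at a minimal $G$-surface $X_{\min}$ that is still linearizable, hence (by the classification of finite subgroups of the Cremona group in \cite{DI}, together with \cite{pro-II}) is $G$-birational to $\bP^2$ and moreover, being \emph{minimal} with $\rk \Pic^G = 1$, must \emph{be} $\bP^2$ with a projectively linear action, or a minimal $G$-conic bundle, or a minimal $G$-Del Pezzo of degree $d \le 5$. The nonlinearizability lemmas proved above (Lemma~\ref{lemm:conic1}, Lemma~\ref{lemm:conic2}) and the rigidity results of \cite{DI}, \cite{CS}, \cite{pro-II} rule out all the minimal $G$-del Pezzo surfaces and conic bundles that are not linearizable; since $X$ \emph{is} linearizable, $X_{\min}$ can only be $\bP^2$ with a linear action, $\bP^1 \times \bP^1$ with a linearizable action, a conic bundle that is in fact linearizable, or a del Pezzo of degree $\ge 6$ with appropriate action. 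In each of these cases one has an explicit full $G$-equivariant (indeed often $G$-linearized) exceptional sequence: $(\cO, \cO(1), \cO(2))$ on $\bP^2$ with its linearizations available because the action lifts linearly; the Beilinson-type collection on $\bP^1 \times \bP^1$ as in Example~\ref{exam:EquivSequences}; the three-block collection exhibited above for DP6; Beilinson-Bernardara-type collections for $\bP^1$-bundles and conic bundles.

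Then I would descend: $X$ is obtained from $X_{\min}$ by a sequence of $G$-equivariant blowups at $G$-stable smooth $0$-dimensional centers (the relative MMP, read backwards). Applying Orlov's blowup formula at each step produces, from a full $G$-equivariant exceptional sequence on the base, a full $G$-invariant exceptional sequence on the blowup; to upgrade "invariant" to "equivariant" one checks that each newly introduced exceptional block — a twist of the structure sheaf of a component of the exceptional divisor — is linearized over its stabilizer, which follows because $X$ is $H$-linearizable for every $H \subseteq G$, so $\mathrm{Am}(X,H) = 0$, and by Lemma~\ref{lemm:InvEquiv} every $H$-invariant line bundle occurring is $H$-linearized. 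The main obstacle I anticipate is the bookkeeping in this last step: one must verify that the exceptional objects assembled from the several blowups, together with the pulled-back collection from $X_{\min}$, can be \emph{ordered} into a single semiorthogonal sequence compatibly with the $G$-action — i.e. that the $G$-orbits of blocks can be interleaved without breaking semiorthogonality — and that the stabilizer of each object is exactly the subgroup over which it carries a linearization. This is where one genuinely uses that the centers blown up are $G$-orbits of points (so their stabilizers are well-controlled) and that $\rk \Pic(X)^G$ considerations constrain which orbits can appear; I expect this to require a careful but ultimately routine induction on the number of blowups, case-by-case over the possible $X_{\min}$ furnished by \cite{DI} and \cite{pro-II}.
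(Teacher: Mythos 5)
Your overall strategy is the same as the paper's: pass to a $G$-minimal model (still linearizable), propagate a full $G$-equivariant exceptional sequence back through the equivariant blowups via Orlov's formula, and then run the Dolgachev--Iskovskikh/Prokhorov classification, discarding the minimal models that are never linearizable and exhibiting explicit collections on the rest. Two small differences in the reduction step: the paper linearizes the new blocks $\cO_E(r)$ simply because the stabilizer of a blown-up point acts linearly on the tangent space at that point, which is more direct than your Amitsur-vanishing argument (Lemma~\ref{lemm:InvEquiv} as stated concerns line bundles on the surface, so your route needs a word about pushing the linearization of $\cO_{\tilde X}(E)$ to $\cO_E(r)$); and the interleaving of blocks you worry about is handled automatically by the semiorthogonal decomposition in Orlov's formula, so no extra bookkeeping is needed.

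The genuine gap is in your list of surviving minimal models: you assert $X_{\min}$ can only be $\bP^2$, $\bP^1\times\bP^1$, a linearizable conic bundle, or a del Pezzo of degree $\ge 6$. This omits degree $5$: the classification does not rule out linearizable minimal actions on the quintic del Pezzo surface (only the $\fA_5$- and $\fS_5$-actions are rigid; the smaller minimal subgroups of $\Aut(X)\simeq\fS_5$, such as $C_5\rtimes C_4$, give linearizable minimal actions), so this case must be treated. The paper does so by invoking the full $\fS_5$-equivariant exceptional collection of \cite{CaTe20}, which restricts to a full $G$-equivariant collection for every subgroup $G\subseteq\fS_5$. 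Conversely, your fallback for conic bundles (``Beilinson--Bernardara-type collections'') is both vaguer than needed and would not deliver what you want: the paper shows, via Lemmas~\ref{lemm:conic1} and~\ref{lemm:conic2}, rigidity, and the {\bf (H1)}-obstruction, that no $G$-minimal conic bundle with singular fibers is linearizable, so the only conic-bundle case that survives is the Hirzebruch surfaces $\bF_n$, handled by an explicit $\GL_2(k)/\mu_n$-invariant collection together with Lemma~\ref{lemm:InvEquiv} (Proposition~\ref{prop:HirzebruchSurfaces}); for an honest conic bundle with degenerate fibers, the Bernardara-type decomposition has a Clifford-algebra component rather than a full exceptional sequence, so that step of your plan would fail if such a case actually occurred.
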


The proof will occupy the remainder of this section. It is based on a detailed analysis of actions, following \cite{DI} and \cite{pro-II}.

\begin{proof}
We assume that $X$ is linearizable. 

\

\noindent
\textbf{Step 1.} 
We reduce to $G$-minimal surfaces: 
Indeed, consider a blowup $\tilde{X}\to X$ in a $G$-invariant set of points.
By Orlov's blowup formula \cite{orlov-mon}, if $X$ admits a full $G$-equivariant exceptional sequence, then so does $\tilde{X}$. 
The stabilizer $G_x\subseteq G$ of a point $x\in X$ acts linearly on the tangent bundle of $X$ at $x$; hence the $G_x$-action on the sheaves $\cO_E (r)$ (where $E$ is the exceptional divisor over $x$) is linearized.

\

\noindent
\textbf{Step 2.} 
By \cite[Thm. 3.8]{DI}, a minimal rational $G$-surface $X$ either admits a structure of a $G$-conic bundle over $\bP^1$ with $\mathrm{Pic}(X)^G \simeq \bZ^2$ or $X$ is isomorphic to a Del Pezzo surface with $\mathrm{Pic}(X)^G \simeq \bZ$.
We proceed via classification in \cite[Section 8]{DI}, depending on the possible values of $d=K_X^2$.

\

\noindent
\textbf{Step 3.}

\noindent
\textbf{Case $d\le 0$:} 
$X$ is a rigid $G$-conic bundle with $8-d$ singular fibres, and in particular, $X\notin\Lin$. 

\medskip

\noindent
\textbf{Case $d=1$:} 
$X$ is a rigid $G$-Del Pezzo surface, thus $X\notin \Lin$, or a $G$-conic bundle, treated in Lemma~\ref{lemm:conic1}. 
\medskip

\noindent
\textbf{Case $d= 2$:}  $X$ is a 
rigid $G$-Del Pezzo surface, thus $X\notin \Lin$, or a $G$-conic bundle. 
If the conic bundle is not exceptional, it is rigid; exceptional conic bundles with $g=2$ are treated in 
Lemma~\ref{lemm:conic2}. 

\medskip

\noindent
\textbf{Case $d= 3$:}  
$X$ is either a minimal $G$-Del Pezzo surface that is rigid, thus $X\notin \Lin$; or a minimal $G$-conic bundle, in which case $G$ contains three commuting involutions two of which have fixed point curves of genus $2$, yielding the {\bf (H1)}-obstruction to linearizability, contradicting the assumption.  

\medskip

\noindent
\textbf{Case $d= 4$:} 
$X$ can be a minimal $G$-Del Pezzo surface. If $X^G=\emptyset$, $X$ is either rigid or superrigid, hence $X\notin \Lin$. If $X^G \neq\emptyset$, then $X$ is $G$-birational to a minimal conic bundle with $d=3$ and we conclude as in the previous case.

If $X$ is a minimal $G$-conic bundle, then either $X$ is an exceptional conic bundle with $g = 1$: assuming that $X$ is linearizable, \cite[Theorem 8.3]{pro-II} implies that the kernel of 
\[
\varrho\colon G \to \mathrm{Aut}(\mathrm{Pic}(X))
\]
is non-trivial, since otherwise $K_X^2$ has to be odd. Then \cite[Classification in \S 8.1]{DI} implies that no elementary transformation is possible and $X$ is not $G$-birational to any Del Pezzo surface, hence $X\notin \Lin$.

Secondly, $X$ can also be a $G$-Del Pezzo surface with two sections with self-intersection
$-1$ intersecting at one point. In this case, $X$ is obtained by regularizing a de Jonqui\`{e}res involution; since such a de Jonqui\`{e}res involution is not conjugate to a projective involution, $X\notin \Lin$. 

\medskip

\noindent
\textbf{Case $d= 5$:} has been considered above.  

\medskip

\noindent
\textbf{Case $d= 6$:} 
$X$ always has a full $G$-invariant exceptional sequence. If $X\in \Lin$, Lemma \ref{lemm:InvEquiv} applies. 

\medskip

\noindent
\textbf{Case $d= 8$:}  
If $X=\mathbb{F}_0=\bP^1\times \bP^1$, then it has the full exceptional sequence, see \cite{Kap88}, 
\[
{\bf E}= (\cO (-1,-1), \cO, \cO (1,0), \cO(0,1)),
\]
which is invariant under the full automorphism group $$\mathrm{Aut}(X)=\mathrm{PGL}_2 (\bC ) \wr  C_2.$$  
If $X$ is linearizable for a subgroup $G\subset \Aut(X)$, then Lemma \ref{lemm:InvEquiv} applies. In that case, every $G$-invariant full exceptional sequence is a $G$-equivariant full exceptional sequence.

When $X=\bF_n$ with $n\ge 2$, we apply Proposition \ref{prop:HirzebruchSurfaces}.

\medskip

\noindent
\textbf{Case $d= 9$:}  
$X=\bP^2$, and there is nothing to show. 
\end{proof}

\begin{prop}\label{prop:HirzebruchSurfaces}
Let $X$ be a $G$-Hirzebruch surface $\mathbb{F}_n$, $n\ge 2$, that is $G$-linearizable. Then $X$ admits a full $G$-equivariant exceptional sequence. 
\end{prop}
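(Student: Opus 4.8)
The plan is to exploit the rigidity of the geometry of $\mathbb{F}_n$ for $n \ge 2$. First I would recall that on $X = \mathbb{F}_n$ there is a canonically defined negative section, namely the unique irreducible curve $C_0$ with $C_0^2 = -n < 0$; since it is unique, it is preserved by the entire automorphism group, and a fortiori it is $G$-invariant. Similarly, the ruling $f\colon X \to \mathbb{P}^1$ is the unique $\mathbb{P}^1$-fibration, so it is $G$-equivariant, and $G$ acts on the base $\mathbb{P}^1$ through a quotient. The Picard group $\mathrm{Pic}(X) \simeq \mathbb{Z} C_0 \oplus \mathbb{Z} F$ (with $F$ a fibre class) then carries a $G$-action that is automatically by permutations on a basis adapted to this splitting — in fact the action on $\mathrm{Pic}(X)$ is trivial, since $C_0$ and $F$ are each individually fixed. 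Consequently any collection of line bundles is automatically $G$-\emph{invariant}.

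Next I would write down an explicit full exceptional sequence of line bundles on $\mathbb{F}_n$; the standard choice (Orlov, or Beilinson-type for projective bundles over $\mathbb{P}^1$) is
\[
{\bf E} = \bigl(\mathcal{O}_X,\ \mathcal{O}_X(F),\ \mathcal{O}_X(C_0),\ \mathcal{O}_X(C_0 + F)\bigr),
\]
or any translate thereof; this comes from the semiorthogonal decomposition $\rD^b(\mathbb{F}_n) = \langle \rD^b(\mathbb{P}^1), \rD^b(\mathbb{P}^1)\otimes \mathcal{O}(C_0)\rangle$ via the projective bundle formula, pulling back $(\mathcal{O}, \mathcal{O}(1))$ on each $\mathbb{P}^1$ factor. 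By the previous paragraph this sequence is $G$-invariant. To upgrade invariance to equivariance I would invoke Lemma~\ref{lemm:InvEquiv}: since $X$ is $G$-linearizable, the hypothesis of that lemma is met, so every $G$-invariant exceptional sequence of line bundles on $X$ is in fact $G$-equivariant. (Concretely, $\mathrm{Am}(X,H) = 0$ for every $H \subseteq G$ because $X$ is $H$-linearizable, so each of the line bundles above, being $H$-invariant for $H = G$, carries a $G$-linearization.) This completes the argument.

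The only point requiring a little care — and the step I would flag as the main obstacle, though it is minor — is confirming that the chosen sequence really is full and exceptional on $\mathbb{F}_n$ for \emph{all} $n \ge 2$, and that it consists of line bundles (so that Lemma~\ref{lemm:InvEquiv} applies verbatim rather than a vector-bundle analogue). This is classical: $\mathbb{F}_n = \mathbb{P}(\mathcal{O}_{\mathbb{P}^1} \oplus \mathcal{O}_{\mathbb{P}^1}(-n))$ is a $\mathbb{P}^1$-bundle over $\mathbb{P}^1$, and Orlov's projective bundle theorem gives a full exceptional sequence of line bundles of length $4$; one checks the relevant $\mathrm{Ext}$-vanishings either directly from Riemann--Roch on the surface or from the decomposition. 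One may alternatively note that $\mathbb{F}_n$ is toric and appeal to the known full strong exceptional collections of line bundles on smooth toric surfaces. With fullness in hand, the $G$-invariance is forced by the uniqueness of $C_0$ and of the ruling, and equivariance then follows formally from linearizability via Lemma~\ref{lemm:InvEquiv}.
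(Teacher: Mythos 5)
Your argument is correct, and its skeleton coincides with the paper's: produce an explicit full exceptional sequence of line bundles on $\mathbb{F}_n$ via the relative Beilinson/projective-bundle decomposition over $\bP^1$, observe it is $G$-invariant, and upgrade invariance to equivariance using Lemma~\ref{lemm:InvEquiv} together with $G$-linearizability. The one place where you genuinely diverge is the justification of invariance: the paper invokes the classification result \cite[Theorem 4.10]{DI}, realizing any finite $G\subset\Aut(\bF_n)$ inside $\GL_2(k)/\mu_n$ via the explicit quotient description of $\bF_n$, and checks that the terms of the sequence are invariant under that group; you instead note that for $n\ge 1$ the negative section $C_0$ and the ruling are unique, so $\Aut(\bF_n)$ acts trivially on $\Pic(\bF_n)$ and \emph{every} sequence of line bundles is automatically $G$-invariant. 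Your route is more elementary and arguably cleaner for this particular step, since it avoids the structure theory of $\Aut(\bF_n)$ altogether and makes clear that the conjugation of $G$ into the reductive part plays no role in the invariance claim; the paper's route has the side benefit of exhibiting the action concretely (which is also how one sees the linearization on the chosen terms explicitly), but is not logically necessary once Lemma~\ref{lemm:InvEquiv} is available. Your sequence $(\cO_X,\cO_X(F),\cO_X(C_0),\cO_X(C_0+F))$ differs from the paper's only by the normalization of the rank-two bundle $\cE$ (the paper takes $\cE=\cO_{\bP^1}(n)\oplus\cO_{\bP^1}$, so its relative hyperplane class is $C_0+nF$), and the fullness and semiorthogonality checks you outline (Orlov's theorem, or direct cohomology vanishing of $\cO(-F)$, $\cO(-C_0)$, $\cO(F-C_0)$, $\cO(-C_0-F)$) go through for all $n\ge 2$.
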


\begin{proof}
If $X =\bF_n$, $n\ge 2$, \cite[Theorem 4.10]{DI} shows that any finite subgroup $G \subset \mathrm{Aut}(X)$ is contained in $\mathrm{GL}_2 (k)/\mu_n$, which is embedded into $\mathrm{Aut}(X)$ as follows: view $\bF_n$ as the quotient $(\bA^2\backslash\{0\})^2/\mathbb{G}_m^2$, acting by
\begin{align*}
    \mathbb{G}_m^2 \times (\bA^2\backslash\{0\})^2 & \to (\bA^2\backslash\{0\})^2\\
    \bigl((\lambda, \mu), (x_0,x_1,y_0, y_1)\bigr) & \mapsto (\lambda\mu^{-n} x_0, \lambda x_1, \mu y_0, \mu y_1),  
\end{align*}
and with projection 
\begin{align*} 
\pi \colon \mathbb{F}_n & \to \bP^1\\
(x_0,x_1,y_0, y_1) & \mapsto (y_0:y_1),
\end{align*}
identifying 
$$
\bF_n=\mathbb{P}(\cO_{\bP^1}(n)\oplus \cO_{\bP^1}),
$$
as a $\bP^1$-bundle. Letting $A=(a_{ij}) \in \mathrm{GL}_2(k)$ act on the $y$-coordinates
\[
A\cdot (x_0,x_1,y_0, y_1) = (x_0,x_1,a_{11}y_0 +a_{12}y_1, a_{21}y_0+a_{22}y_1),
\]
we obtain an action of $\mathrm{GL}_2(k)$ on $\bF_n$; clearly $\mu_n\subset \mathrm{GL}_2(k)$ acts trivially on $\bF_n$, and we get an induced action of $\mathrm{GL}_2 (k)/\mu_n$. Actually the full automorphism group of $\bF_n$ is a semidirect product of $\mathrm{GL}_2 (k)/\mu_n$ by a normal subgroup $k^{n+1}$, thought of as the space of binary forms of degree $n$ with its natural action of $\mathrm{GL}_2 (k)/\mu_n$, because $\bF_n$ can also be realized as the blowup of the weighted projective space $\bP (1,1,n)$ at its singular point.

\medskip

The group $\mathrm{GL}_2 (k)/\mu_n$ is a central product of $k^\times$, embedded diagonally, and  $\mathrm{SL}_2(k)$, intersecting in the subgroup generated by $-\mathrm{id}$, for $n$ odd. Then every term in the exceptional sequence (using the relative version of Beilinson's theorem as in \cite{orlov-mon})
\[
(\ast) \quad \quad (\cO_{\bP^1}, \cO_{\bP^1}(1), \pi^*(\cO_{\bP^1}), \pi^*(\cO_{\bP^1}(1))\otimes \cO_{\bP(\mathcal{E})}(1)),
\]
where $\mathcal{E}=\cO_{\bP^1}(n)\oplus \cO_{\bP^1}$ and $\cO_{\bP(\mathcal{E})}(1)$ is the relative hyperplane bundle on $\mathbb{P}(\cO_{\bP^1}(n)\oplus \cO_{\bP^1}) \to \bP^1$, is invariant under $\mathrm{GL}_2 (k)/\mu_n$. We conclude by Lemma \ref{lemm:InvEquiv}.  
 \end{proof}

\section{Threefolds}
\label{sect:three}

There is a wealth of results concerning linearizability of $G$-actions on rational threefolds, in the context of birational rigidity. 
In absense of this property, only few examples are known. Of particular interest are threefolds without obvious obstructions, such as nontriviality of the Amitsur invariant or failure of {\bf (H1)}. 

In the arithmetic context, rationality over nonclosed fields of smooth geometrically rational Fano threefolds, e.g., those of Picard number one, has been investigated in  \cite{HTQuadrics21}, \cite{HT-Fano}, \cite{BW},
\cite{Kuz-Pro}:
\begin{itemize}
    \item[(1)] $V_5$,
    \item[(2)] $\bP^3$, $Q_3$,  $X_{12}$, $X_{22}$,
    \item[(3)] $V_4$, $X_{16}$, $X_{18}$ 
\end{itemize}
(we use standard notation for the threefolds from those papers). 
As is well-known, $V_5$, a Del Pezzo threefold of degree 5, is always rational. The rationality of forms of varieties in group (2) is controlled by the existence of rational points. In addition to this condition, rationality of varieties in group (3), i.e., forms of complete intersections of two quadrics, Fano threefolds of degree 16, 18,  
requires the existence, over the ground field, 
of lines, twisted cubics, or conics, respectively.   
The papers \cite{Kuz-Pro}, \cite{K-families} put this into the framework of derived categories, investigating the semiorthogonal decompositions in this context. 
In particular, the only cases where the semiorthogonal decomposition does not involve 
Brauer classes from the base, are 
$V_5$ and $X_{12}$, by \cite[Theorem 1.1 and Theorem 1.3]{K-families}.

We turn to the equivariant setting and linearizability questions.  
The case of quadrics is already involved \cite[Section 9]{TYZ}:
\begin{itemize} 
\item 
existence of fixed points is not necessary for linearizability, when $G$ is nonabelian,
\item 
there are cases, when linearizability is obstructed by the Burnside formalism, but stable linearizability is open, 
\item 
there are cases with no visible obstructions, but resistant to all attempts to linearize the action. 
\end{itemize}
As we are interested in situations 
where no obstructions are visible in the derived category, we focus on $V_5$ and $X_{12}$. 

\subsection*{Quintic Del Pezzo threefolds}

As in Example~\ref{exam:a5}, let $W$ be a faithful 5-dimensional representation of  a finite group $G$, such that $\wedge^2(W)$ contains a faithful 7-dimensional subrepresentation $V$, so that 
$$
X=\Gr(2,W)\cap \bP(V),
$$
is a smooth threefold, with generically free action of $G$, a quintic Del Pezzo threefold. 
The restriction $\cU_X$ of the universal rank-2 subbundle $\cU$ over $\Gr(2,W)$ to $X$ 
is naturally $G$-linearized. 
Orlov \cite{OrlV5} showed that there is a full exceptional sequence in $\rD^b (X)$ of the form 
\[
\langle (W \otimes \cO)/\mathcal{U} \otimes \cO(-1), \mathcal{U}, \cO, \cO(1)  \rangle . 
\]
This is a sequence of $G$-linearized objects.

There is a distinguished such threefold, with $G=\fA_5$-action, considered in Example~\ref{exam:a5}. It is:  
\begin{itemize}
    \item $G$-birationally rigid, and thus $X\notin \PLin$, and  
    \item $X\in \SLin$. 
\end{itemize}
On the other hand, recall that there is a longstanding conjecture in the context of 
derived categories relating the rationality of
a smooth projective variety (over an algebraically closed field of charactertistic zero) 
to the existence of a full exceptional sequence in $\rD^b(X)$. 
The above example contradicts the most suggestive analog of this conjecture in the equivariant context. Note that in the arithmetic context, every form of a quintic Del Pezzo threefold is rational, see, e.g., \cite[Theorem 1.1]{Kuz-Pro}.

\subsection*{Fano threefolds of genus 7}
We follow the discussion in \cite{Mukai92, Mukai95} and its summary in \cite{Kuz05} and \cite{AG5}. Consider a ten-dimensional complex vector space $V$ with a non-degenerate symmetric bilinear form on it, and denote by $\mathrm{Spin}_{10}$ the associated spinor group with $16$-dimensional half-spinor representations $\mathrm{S}^\pm V$. Consider the Lagrangian Grassmannian of $5$-dimensional isotropic subspaces of $V$: it has two connected components $\mathrm{LGr}_+ (V)$ and $\mathrm{LGr}_{-}(V)$ that can be identified with the closed orbits of the group $\mathrm{Spin}_{10}$ in $\bP (\mathrm{S}^+ V)$ and $\bP (\mathrm{S}^- V)$. The representations $\mathrm{S}^+ $ and $\mathrm{S}^- $ are dual to each other. Choose a pair of subspaces and their orthogonal subspaces (subscripts denote dimensions)
\[
A_8 \subset A_9 \subset \mathrm{S}^+ V, \quad B_7 \subset B_8 \subset \mathrm{S}^- V .
\]
Let
\begin{align*}
    X := & \mathrm{LGr}_+ (V) \cap \bP (A_9) \subset \bP (\mathrm{S}^+ V), \\
    S : =& \mathrm{LGr}_+ (V) \cap \bP (A_8) \subset \bP (\mathrm{S}^+ V)
\end{align*}
and
\begin{align*}
    C^{\vee} := & \mathrm{LGr}_{-} (V) \cap \bP (B_7) \subset \bP (\mathrm{S}^- V), \\
    S^{\vee} : =& \mathrm{LGr}_{-} (V) \cap \bP (B_8) \subset \bP (\mathrm{S}^- V). 
\end{align*}
It is known that $X$ is smooth if and only if $C^{\vee}$ is smooth, and $S$ is smooth if and only if $S^{\vee}$ is smooth, which we will now assume. Then $X=X_{12}$ is an index $1$ degree $12$ genus $7$ Fano threefold with a smooth K3 hyperplane section $S$ (a polarized K3 surface of degree $12$); all such pairs $(X, S)$ are obtained via the above linear algebra construction by \cite{Mukai92, Mukai95}. Moreover, $S^{\vee}$ is also a K3 surface of degree $12$ and $C^{\vee}$ is a canonically embedded curve of genus $7$. 
Note that restricting the universal bundle $\mathcal{U}$ from $\mathrm{Gr}(5, V)$, we obtain rank $5$ vector bundles $\mathcal{U}_+, \mathcal{U}_{-}$ on $\mathrm{LGr}_{+} (V)$ and $\mathrm{LGr}_{-} (V)$. 

It is known that the Sarkisov link with center a general point $x\in X$ gives a birational map to a quintic Del Pezzo threefold, hence $X$ is rational \cite[Theorem 5.17 (i)]{Kuz-Pro}, \cite{AG5}. 

We now pivot to the equivariant setup, following \cite[Example 2.11]{prokh3}. Consider $G=\mathrm{SL}_2(\mathbb{F}_8)$ and let $U$ 
be a 9-dimensional irreducible representation of $G$. 
There is a unique $G$-invariant quadric $Q\subset \bP (U)$, with generically free $G$-action.  
Note that, quite generally, the spinor varieties for $\mathrm{Spin}_{2n-1}$ and $\mathrm{Spin}_{2n}$ are isomorphic, indeed, projectively equivalent as subvarieties of projective space $\bP^N$, $N=2^{n-1}-1$, in their spinor embeddings. So we can also think of $\mathrm{LGr}_+ (V)$ as well as $\mathrm{LGr}_- (V)$ as the spinor variety for $\mathrm{Spin}_9$, parametrizing projective spaces of dimension $3$ on a smooth quadric in $\bP^8$. Hence $G$ acts on $\mathrm{LGr}_+ (V)$ (and $\mathrm{LGr}_- (V)$). The embedding of these into $\bP (S^+ V)$ and $\bP (S^- V)$ is given by the positive generator of the Picard group, eight times of which is the anticanonical bundle. According to \cite[Example 2.11]{prokh3}, the group $G$ acts in $\bP^{15}$ with invariant projective subspaces of dimensions $8$ and $6$ such that we get an action of $G$ on $X$ and $C^{\vee}$.

The group $G$ contains the Frobenius group $\mathfrak F_8$, which  
does not act on $\bP^3$, so that the $G$-action on $X$ is not linearizable. 

\medskip

The derived category of $X$ is described in \cite[Theorem 4.4]{Kuz05} and \cite[Theorem 5.15]{K-families}. It has a semiorthogonal decomposition 
\[
\rD^b (X) = \langle \mathcal{U}_+, \cO_X, \rD^b (C^{\vee}) \rangle . 
\]
The proof uses the interpretation of $C^{\vee}$ as the moduli space of stable rank $2$ vector bundles on $X$ with $c_1=1, c_2=5$ given in \cite{IM04}. 
Indeed, define $$
\mathcal{A}_X:=  \, ^\perp\langle \mathcal{U}_+, \cO_X \rangle.
$$
Kuznetsov constructs a fully faithful Fourier-Mukai functor 
\[
\Phi_{\mathcal{E}} \colon \rD^b (C^{\vee}) \to \mathcal{A}_X \subset \rD^b (X)
\]
from the universal bundle $\mathcal{E}$ on $X\times C^{\vee}$, and this is an equivalence of categories. 

In our context, we need to check that $\Phi_{\mathcal{E}}$ is a morphism of $G$-categories, in the sense of, e.g., \cite[Section 2]{BOver}.  
We do this by showing that the $G$-category structure on $\mathrm{D}^b (C^{\vee})$, given by the geometric action of $G$ on $C^{\vee}$, and the $G$-category structure on $\mathcal{A}_X$ as the left orthogonal to the exceptional sequence of $G$-linearized objects $\mathcal{U}_+, \cO_X$,  coincide. This follows directly from the fact that the Fourier-Mukai kernel bundle $\mathcal{E}$ is a $G$-linearized vector bundle on $X \times C^{\vee}$.  The easiest way to see this is to use the explicit description of $\mathcal{E}$ in \cite[Constructions in \S 2 and Corollary 2.5]{Kuz05}. Indeed, denoting by $\mathcal{U}_+^X$ and $\mathcal{U}_{-}^{C^{\vee}}$ the pullbacks of the tautological subbundles from $\mathrm{LGr}_+(V)$ and $\mathrm{LGr}_{-}(V)$ to 
$$
X\times C^{\vee}\subset \mathrm{LGr}_+(V) \times \mathrm{LGr}_-(V),
$$
the bundle $\mathcal{E}$ is cokernel of the morphism
\[
\xi \colon \mathcal{U}_{-}^{C^{\vee}} \hookrightarrow V\otimes \mathcal{O}_{X\times C^{\vee}} \simeq V^* \otimes \mathcal{O}_{X\times C^{\vee}} \to (\mathcal{U}_+^X)^{\vee},
\]
where the isomorphism in the middle is given by the quadratic form and the other maps are the canonical inclusion and surjection. 

\medskip

In summary, the genus 7 $G$-Fano threefold $X$ furnishes another example with a nonlinearizable action, where all pieces in a semiorthogonal decomposition of the derived category are ``geometric", i.e., equivalent as $G$-categories to derived categories of $G$-varieties (of dimension $\le 1$). Thus the pieces of these decompositions fail to detect the nonlinearizability of $X$.

\bibliographystyle{alpha}
\bibliography{derbirDP}

\end{document}